\documentclass{article}
\usepackage[utf8]{inputenc}
\usepackage{amsfonts}
\usepackage{enumerate}
\usepackage[british]{babel}
\usepackage{amsthm}
\usepackage{amsmath}
\usepackage{amssymb}
\usepackage{caption}
\usepackage{stmaryrd}
\usepackage[toc,page]{appendix}
\usepackage{empheq,mathtools}
\usepackage{accents}
\usepackage[utf8]{inputenc}

\title{Gelfand Pairs of Complex Reflection Groups}
\author{Robin van Haastrecht}

\newtheorem{theorem}{Theorem}[section]

\theoremstyle{definition}
\newtheorem{definition}[theorem]{Definition}

\newtheorem*{remark}{Remark}

\theoremstyle{theorem}
\newtheorem{proposition}[theorem]{Proposition}
\newtheorem{lemma}[theorem]{Lemma}
\newtheorem{corollary}[theorem]{Corollary}

\newcommand\myeq{\stackrel{\mathclap{\normalfont\mbox{(1)}}}{=}}

\textwidth = 14 cm

\begin{document}
\thispagestyle{empty}
\begin{centering} 

\textbf{\Large Spherical functions for Gelfand pairs of complex reflection groups}\\[0.2\baselineskip] 
Robin van Haastrecht \\

\vspace{1.5cm}
\begin{abstract}
	In this article the zonal spherical functions of the Gelfand pair $(G(r,d,n), S_n)$ of complex reflection groups will be calculated. After this, a product formula for these spherical functions and a discrete analog of the Laplace operator which has the spherical functions as eigenfunctions will be given.
\end{abstract}
\end{centering}
{\small \textit{Keywords:} Gelfand pairs of finite groups; Finite complex reflection groups; $\\ (n+1, m+1)$-hypergeometric functions;  Zonal spherical functions; Product formulas; Laplace operator}

\section{Introduction}
If $G$ and $H$ are groups and the induced representation $1_H^G$ splits multiplicity-free, $(G,H)$ is a Gelfand pair. The theory of Gelfand pairs was originally started in the setting of Lie groups in a paper by I.M. Gelfand \cite{gelfandorigin}. Since then, Gelfand pairs of locally compact groups and finite groups have also been studied, and have applications in areas such as special functions \cite{akamizu, dunklpol, mizukawa04} and probability \cite{cecsil08}. In this article we will study finite Gelfand pairs of complex reflection groups. The group theoretic approach of Gelfand pairs will be used to give some results of special functions as spherical functions of complex reflection groups and the spherical functions of more general complex reflection groups will be given. An interesting follow up question would be if we could quantise these spherical functions like in \cite{koelink}.

In Section 2 the preliminaries for the article will be given; we recall the theory of finite Gelfand pairs and spherical functions and we remind ourselves of the complex reflection groups $G(r,d,n)$. The results follow after. In Section 3 the spherical functions of the complex reflection groups $G(r,d,n)$ will be given. Sections 4 and 5 will focus on an application of the theory of Gelfand pairs to special functions, respectively a product formula for the hypergeometric function and a discrete analogue of the Laplace operator which has the spherical functions as eigenfunctions.

\vspace{.5 cm}

\textit{Notation}

In this article $\mathbb{N}$ is defined to be the natural numbers including $0$. The binomial $\binom{n}{k}$ and multinomial $\binom{n}{k_1, \dots, k_n}$ are zero when $k < 0$ or a $k_i < 0$ respectively. The Pochhammer symbol is defined as: $(x)_n = x(x+1)\dots(x+n-1)$ where $(x)_0 = 1$. This gives us $\binom{x}{m} = (-1)^m \frac{(-x)_m}{m!}$. The groups considered in this article will be assumed to be finite. The inner product spaces $V$ considered here are complex. The dihedral group $D_r$ is of order $2r$.

\section{Preliminaries}

\subsection{Gelfand pairs and zonal spherical functions}
We will introduce Gelfand pairs and zonal spherical functions. In this section $G$ will be a finite group and $H \subseteq G$ a subgroup. $C(G)$ will be the $\mathbb{C}$-vector space of functions from $G$ to $\mathbb{C}$. This vector space comes equipped with a natural inner product, i.e. $\langle f,h \rangle = \sum_{g \in G} f(g)\overline{h(g)}$. Similarly, $C(G/H)$ is the vector space of functions on the space $G/H$, which can be identified with the subspace of right $H$-invariant functions of $C(G)$. The vector space of functions on $C(H \backslash G / H)$ can be similarly identified with the subspace of bi-$H$-invariant functions of $C(G)$. The space $C(G)$ can be turned into an algebra by using the convolution product:
$$(f*h)(g) = \sum_{x, y \ : \ xy = g}f(x)h(y) = \sum_{t \in G} f(t)h(t^{-1}g).$$
Note that this makes $C(G)$ isomorphic to the group algebra and both $C(G / H)$ and \\ $C(H \backslash G / H)$ are subalgebras as subspaces of $C(G)$.

\begin{definition}
A group $G$ with a subgroup $H$ form a \emph{Gelfand pair} $(G,H)$ if the representation $1_H^G = \mathrm{Ind}_H^G(1_H) = \bigoplus_{i=1}^s V_i$ is multiplicity-free.
\end{definition}

We have the equality $s = |H \backslash G / H|$ from \cite[Corollary 4.4.3]{cecsil08}. We will state a useful criterium for Gelfand pairs.

\begin{lemma}[Gelfand's Lemma {\cite[Exercise 4.3.3]{cecsil08}}]
Let $G$ be a group and $H \subseteq G$ a subgroup. If there exists an automorphism $\tau$ of $G$ such that $g^{-1} \in H\tau(g)H$, then $(G,H)$ is a Gelfand pair. We call such a Gelfand pair weakly symmetric. \qed
\label{11}
\end{lemma}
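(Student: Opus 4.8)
The plan is to reduce the statement to the commutativity of the Hecke algebra. I would use the standard characterisation (available from the same reference \cite{cecsil08}) that $(G,H)$ is a Gelfand pair if and only if the algebra $C(H\backslash G/H)$ of bi-$H$-invariant functions is commutative under convolution. The reason is that the commutant $\mathrm{End}_G(1_H^G)$ is isomorphic (as an algebra) to $C(H\backslash G/H)$, and writing $1_H^G = \bigoplus_i m_i V_i$ with the $V_i$ distinct irreducibles, this semisimple algebra is a direct sum $\bigoplus_i \mathrm{M}_{m_i}(\mathbb{C})$, which is commutative precisely when every $m_i \le 1$, i.e. when the decomposition is multiplicity-free. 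So it suffices to extract from $\tau$ enough symmetry to force $f*h = h*f$ for all bi-$H$-invariant $f,h$.

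First I would introduce two natural maps on $C(G)$: let $f^{\vee}(g) = f(g^{-1})$ and let $(\sigma f)(g) = f(\tau(g))$. A direct change of variables in the convolution sum shows that $(\,\cdot\,)^{\vee}$ is an anti-automorphism, namely $(f*h)^{\vee} = h^{\vee}*f^{\vee}$, while $\sigma$ is an algebra automorphism, $\sigma(f*h) = \sigma f * \sigma h$; for the latter one uses that $\tau$ is bijective and multiplicative, so that factorisations $xy = \tau(g)$ correspond bijectively to factorisations with product $g$. Composing these, $\Theta := \sigma \circ (\,\cdot\,)^{\vee}$, given explicitly by $(\Theta f)(g) = f(\tau(g)^{-1})$, is an anti-automorphism of $C(G)$.

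The heart of the argument is that $\Theta$ restricts to the identity on bi-$H$-invariant functions. Taking inverses in the hypothesis $g^{-1} \in H\tau(g)H$ yields $\tau(g)^{-1} \in HgH$, so $g$ and $\tau(g)^{-1}$ lie in the same double coset; since a bi-$H$-invariant function is constant on double cosets, $(\Theta f)(g) = f(\tau(g)^{-1}) = f(g)$. Thus $\Theta f = f$ for every $f \in C(H\backslash G/H)$, and in particular $\Theta$ preserves this subalgebra. Because $f*h$ is again bi-$H$-invariant whenever $f,h$ are, I would then conclude
\[
	f*h = \Theta(f*h) = \Theta(h)*\Theta(f) = h*f,
\]
which is exactly the commutativity required.

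The main obstacle is not any single computation but the bookkeeping of signs and orders: one must check that $(\,\cdot\,)^{\vee}$ reverses the order of convolution while $\sigma$ preserves it, so that the composite $\Theta$ is genuinely anti-multiplicative, and that inverting the hypothesis lands $\tau(g)^{-1}$ in $HgH$ rather than in $H\tau(g)H$. Once these points are pinned down the proof closes immediately, the only genuine external input being the characterisation of Gelfand pairs through commutativity of the Hecke algebra quoted at the outset.
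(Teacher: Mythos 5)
Your proof is correct, but there is nothing in the paper to compare it against: the paper states this lemma with only a citation to \cite[Exercise 4.3.3]{cecsil08} and no argument, so your proposal supplies a proof where the paper supplies none. What you have written is the standard solution to that exercise (Gelfand's trick), and every delicate point checks out. The identity $(f*h)^{\vee} = h^{\vee}*f^{\vee}$ follows from the substitution $(x,y)\mapsto (y^{-1},x^{-1})$ in the convolution sum; $\sigma f = f\circ\tau$ is multiplicative because factorisations of $\tau(g)$ correspond bijectively to factorisations of $g$ under $\tau^{-1}$; hence $\Theta=\sigma\circ(\,\cdot\,)^{\vee}$ is an anti-automorphism. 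Inverting the hypothesis $g^{-1}=h_1\tau(g)h_2$ does give $\tau(g)^{-1}=h_2\,g\,h_1\in HgH$ (not merely in $H\tau(g)H$), so $\Theta$ fixes every bi-$H$-invariant function; and since the paper already observes in Section 2.1 that $C(H\backslash G/H)$ is a subalgebra under convolution, the chain $f*h=\Theta(f*h)=\Theta(h)*\Theta(f)=h*f$ is legitimate. The one external input, that commutativity of $C(H\backslash G/H)$ is equivalent to $1_H^G$ being multiplicity-free, is itself a theorem in \cite{cecsil08}, proved exactly by the commutant argument $\mathrm{End}_G(1_H^G)\cong\bigoplus_i \mathrm{M}_{m_i}(\mathbb{C})$ that you sketch, so invoking it is consistent with the paper's own practice of citing that book for foundational facts. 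In short: the proposal is a complete and correct proof of a statement the paper merely cites.
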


Let $(G,H)$ be a Gelfand pair, so $1_H^G$ splits multiplicity-free as a $\mathbb{C}[G]$-module, i.e. $1_H^G = \bigoplus_{i=1}^s V_i$. This means by Frobenius reciprocity that $\langle 1_H,\mathrm{Res}V_i \rangle_H = \langle 1_H^G,V_i \rangle_G = 1$. Thus every $V_i$ has a 1-dimensional subspace $V_i^H$ of $H$-invariant vectors. Remember each $V_i$ comes equipped with a $G$-invariant inner product. Now choose a unit vector $v_1^i \in V_i^H \subseteq V_i$. Note that in each case $\{v_1^i\}$ can be extended to an orthonormal basis $\{v_j^i\}_{j=1}^{\mathrm{dim}(V_i)}$ for $V_i$.  Let $C(G/H) \subseteq C(G)$ and define a linear map:
$$\phi_i : V_i \rightarrow C(G/H)$$
$$v \mapsto (g \mapsto \langle v | gv_1^i \rangle).$$
This map $\phi_i$ is in fact a $\mathbb{C}[G]$-homomorphism because:
$$\phi_i(gv)(k) = \langle gv,kv_1^i \rangle = \langle v,g^{-1} k v_1^i \rangle = \phi_i(v)(g^{-1}k) = (g(\phi_i(v))(k).$$
Note that $\phi_i$ is injective. It follows that $V_i \cong \phi_i(V_i)$ and we deduce: $C(G/H) = \bigoplus_{i=1}^s \phi_i(V_i)$. We are now ready to define the zonal spherical functions.

\begin{definition}
The \emph{zonal spherical functions} $\omega_i$ are the $\omega_i = \phi_i(v_1^i)$ for $1 \leq i \leq |H \backslash G / H|$. This means that $\omega_i(g) = \langle v_1^i | gv_1^i \rangle = \overline{\rho_{11}^i(g)}$.
\label{defspher}
\end{definition}

We list some properties of zonal spherical functions.

\begin{proposition}[{\cite[Proposition 4.5.7]{cecsil08}}]
The zonal spherical functions have the following properties:

(1) $\omega_i(h_1gh_2) = \omega_i(g)$ for any $g \in G$ and $h_1, h_2 \in H$.

(2) $\omega_i(e) = 1$ and $\omega_i(g^{-1}) = \overline{\omega_i(g)}$ for any $g \in G$.

(3) $\omega_i * \omega_j = \frac{|G|}{\mathrm{dim}(V_i)} \delta_{ij} \omega_i$.
\label{12} \qed
\end{proposition}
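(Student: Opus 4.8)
The plan is to treat the three statements separately, since (1) and (2) follow directly from the defining formula $\omega_i(g) = \langle v_1^i | g v_1^i \rangle$, whereas (3) is the substantive claim and will be reduced to the Schur orthogonality relations for matrix coefficients.

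For (1) I would use that each $V_i$ carries a $G$-invariant inner product and that $v_1^i$ is $H$-invariant. Writing $\omega_i(h_1 g h_2) = \langle v_1^i | h_1 g h_2 v_1^i \rangle$, the vector $h_2 v_1^i$ collapses to $v_1^i$, and moving $h_1$ across the inner product by $G$-invariance turns $\langle v_1^i | h_1 g v_1^i \rangle$ into $\langle h_1^{-1} v_1^i | g v_1^i \rangle = \langle v_1^i | g v_1^i \rangle = \omega_i(g)$, since $h_1^{-1} v_1^i = v_1^i$. For (2), $\omega_i(e) = \langle v_1^i | v_1^i \rangle = 1$ because $v_1^i$ was chosen to be a unit vector, and $\omega_i(g^{-1}) = \langle v_1^i | g^{-1} v_1^i \rangle = \langle g v_1^i | v_1^i \rangle$ by invariance, which equals $\overline{\langle v_1^i | g v_1^i \rangle} = \overline{\omega_i(g)}$ by the Hermitian symmetry of the inner product.

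The heart of the proof is (3). Here I would pass to the matrix coefficients of the unitary representation on $V_i$ with respect to the orthonormal basis $\{v_k^i\}$, using the identity $\omega_i(g) = \overline{\rho_{11}^i(g)}$ recorded in Definition \ref{defspher}. Expanding the convolution gives $(\omega_i * \omega_j)(g) = \sum_{t \in G} \overline{\rho_{11}^i(t)}\,\overline{\rho_{11}^j(t^{-1}g)}$, and I would rewrite $\rho_{11}^j(t^{-1}g) = \sum_k \overline{\rho_{k1}^j(t)}\,\rho_{k1}^j(g)$ using unitarity, i.e. $\rho^j(t^{-1}) = \rho^j(t)^*$. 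After interchanging the two sums, the inner sum $\sum_{t} \rho_{k1}^j(t)\,\overline{\rho_{11}^i(t)}$ is isolated, and the Schur orthogonality relations yield $\frac{|G|}{\mathrm{dim}(V_i)}\delta_{ij}\delta_{k1}$. Substituting back leaves exactly $\frac{|G|}{\mathrm{dim}(V_i)}\delta_{ij}\,\overline{\rho_{11}^i(g)} = \frac{|G|}{\mathrm{dim}(V_i)}\delta_{ij}\,\omega_i(g)$.

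The main obstacle I anticipate is purely bookkeeping: keeping the complex conjugations and index orderings consistent when converting between $\omega_i$ and the $\rho_{ab}^i$, and invoking the precise form of the relation $\sum_{t \in G}\rho_{ab}^i(t)\overline{\rho_{cd}^j(t)} = \frac{|G|}{\mathrm{dim}(V_i)}\delta_{ij}\delta_{ac}\delta_{bd}$ with the correct matching of indices. Once the orthogonality relations are in hand there is no genuine difficulty, since the Kronecker deltas $\delta_{ij}$ and $\delta_{k1}$ force the sum to collapse automatically. An equivalent route would be to observe that the normalized functions $\tfrac{\mathrm{dim}(V_i)}{|G|}\omega_i$ are the primitive idempotents of the commutative Hecke algebra $C(H\backslash G/H)$, but the matrix-coefficient computation is the most transparent and self-contained.
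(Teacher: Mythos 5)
Your proposal is correct. Note, however, that the paper itself gives no proof of this proposition: it is quoted verbatim from \cite[Proposition 4.5.7]{cecsil08} and marked as known, so there is no in-paper argument to compare against. Your argument is the standard one: parts (1) and (2) follow exactly as you say from the $H$-invariance of $v_1^i$, the $G$-invariance of the inner product, and the normalisation $\|v_1^i\|=1$; for part (3) your reduction is sound, since writing $\omega_j(t^{-1}g)=\overline{\rho_{11}^j(t^{-1}g)}$, expanding $\rho^j(t^{-1}g)=\rho^j(t)^*\rho^j(g)$ by unitarity, and interchanging sums isolates $\sum_{t\in G}\rho_{k1}^j(t)\overline{\rho_{11}^i(t)}$, to which Schur orthogonality applies because the $V_i$ are pairwise non-isomorphic irreducibles (the decomposition of $1_H^G$ is multiplicity-free) each carrying a $G$-invariant inner product and an orthonormal basis $\{v_k^i\}$. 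The collapse via $\delta_{ij}\delta_{k1}$ then yields $\frac{|G|}{\dim(V_i)}\delta_{ij}\,\overline{\rho_{11}^i(g)}=\frac{|G|}{\dim(V_i)}\delta_{ij}\,\omega_i(g)$ as claimed. In effect your write-up supplies a self-contained proof of a statement the paper delegates to the literature, and it is essentially the proof given in the cited reference.
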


\begin{corollary}
Let $D_k$ be a double coset in $H \backslash G / H$, which can be indexed by $k$ ranging from $1$ to $s$. It follows that if we write $\omega_i(D_k) = \omega_i(g)$ for any $g \in D_k$, then:
$$\frac{1}{|G|} \sum_{k=1}^s |D_k| \omega_i(D_k)\overline{\omega_j(D_k)} = \delta_{ij} \mathrm{dim} (V_i)^{-1}.$$
Note that this means the spherical functions are orthogonal for the inner product of $C(G)$. Furthermore, the $\{\omega_i\}_{i=1}^s$ form an orthogonal basis for the subspace $C(H \backslash G / H) \subseteq C(G)$.
\label{13}
\end{corollary}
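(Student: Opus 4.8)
The plan is to extract the orthogonality relation directly from the convolution identity in Proposition \ref{12}(3) by evaluating both sides at the identity element $e$. First I would expand $(\omega_i * \omega_j)(e)$ using the definition of the convolution product $(f*h)(g) = \sum_{t \in G} f(t) h(t^{-1}g)$; setting $g = e$ gives $(\omega_i * \omega_j)(e) = \sum_{t \in G} \omega_i(t) \omega_j(t^{-1})$. Applying property (2) of Proposition \ref{12}, namely $\omega_j(t^{-1}) = \overline{\omega_j(t)}$, turns this into $\sum_{t \in G} \omega_i(t) \overline{\omega_j(t)}$, which is precisely the $C(G)$-inner product $\langle \omega_i, \omega_j \rangle$. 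On the other hand, evaluating the right-hand side of Proposition \ref{12}(3) at $e$ and using $\omega_i(e) = 1$ yields $\frac{|G|}{\mathrm{dim}(V_i)} \delta_{ij}$. Equating the two evaluations gives $\langle \omega_i, \omega_j \rangle = \frac{|G|}{\mathrm{dim}(V_i)} \delta_{ij}$, which already establishes that the spherical functions are orthogonal for the inner product of $C(G)$.

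To pass from this global inner product to the weighted sum over double cosets, I would invoke property (1) of Proposition \ref{12}: each $\omega_i$ is bi-$H$-invariant and therefore constant on every double coset $D_k$, so the value $\omega_i(D_k)$ is well-defined. Partitioning $G$ into its double cosets $D_1, \dots, D_s$ lets me regroup the sum as $\sum_{t \in G} \omega_i(t) \overline{\omega_j(t)} = \sum_{k=1}^s |D_k|\, \omega_i(D_k) \overline{\omega_j(D_k)}$. Substituting the inner product value computed above and dividing through by $|G|$ then produces the claimed identity $\frac{1}{|G|} \sum_{k=1}^s |D_k| \omega_i(D_k)\overline{\omega_j(D_k)} = \delta_{ij} \mathrm{dim}(V_i)^{-1}$.

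Finally, for the basis statement I would argue by a dimension count. The orthogonality relation forces the $\{\omega_i\}_{i=1}^s$ to be linearly independent, and since each is bi-$H$-invariant they all lie in the subspace $C(H \backslash G / H)$. As bi-$H$-invariant functions are identified with functions on the set of double cosets, we have $\mathrm{dim}\, C(H \backslash G / H) = |H \backslash G / H| = s$, so $s$ linearly independent elements of this space must span it; hence the $\{\omega_i\}$ form an orthogonal basis. I do not expect a genuine obstacle here: the only step demanding care is the bookkeeping that collapses the sum over all of $G$ into the weighted sum over double cosets, and this rests entirely on the bi-invariance furnished by property (1).
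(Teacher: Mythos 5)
Your proposal is correct and is exactly the argument the paper intends: the paper's one-line proof ("fill in $\omega_i * \omega_j(e)$ and use Proposition \ref{12}") unpacks precisely into your evaluation of the convolution at $e$, the use of properties (1) and (2) to rewrite and regroup the sum over double cosets, and the dimension count $s = |H \backslash G / H|$ for the basis claim. No gaps; you have simply written out the details the paper leaves implicit.
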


\begin{proof}
This follows by filling in $\omega_i * \omega_j(e)$ and using Proposition \ref{12}.
\end{proof}

\begin{remark}
Because $C(G/H) = \bigoplus_{i=1}^s \phi_i(V_i)$ and we have an orthonormal basis $\{v_j^i\}_{j=1}^{\mathrm{dim}(V_i)}$ for each $V_i$, we know that $\{ \phi_i(v_j^i)\}_{i,j}$ forms a basis for $C(G/H) \subseteq C(G)$. We know that $\phi_i(v_j^i)(g) = \langle v_j^i|gv_1^i \rangle = \overline{\rho^i_{j1}(g)}$.
\end{remark}

We have another characterisation of spherical functions.

\begin{theorem}[{\cite[Theorem 4.5.3]{cecsil08}}]
Let $(G,H)$ be a Gelfand pair. A bi-$H$-invariant non-zero function $\omega$ is a zonal spherical function if and only if:
\begin{equation}
\forall_{g,k \in G} \ \frac{1}{|H|}\sum_{h \in H} \omega(ghk) = \omega(g)\omega(k). \qed
\label{eq:sumformula}
\end{equation}
\label{15}
\end{theorem}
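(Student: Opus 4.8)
The statement is an equivalence, so I would prove the two implications separately, and the forward one first. Suppose $\omega = \omega_i$ for some $i$, realised as $\omega(g) = \langle v_1^i \mid \rho^i(g) v_1^i\rangle$ with $\rho^i$ the unitary representation on $V_i$ and $v_1^i$ a unit $H$-fixed vector. The key observation is that the averaging operator $P = \frac{1}{|H|}\sum_{h\in H}\rho^i(h)$ is the orthogonal projection of $V_i$ onto its space of $H$-invariants, which by the Gelfand pair hypothesis is the line $\mathbb{C}v_1^i$; this is immediate since each $\rho^i(h)$ is unitary and $H$ is closed under inversion, so $P^2 = P = P^{*}$. Hence $Pw = \langle v_1^i \mid w\rangle v_1^i$, and in particular $P\rho^i(k)v_1^i = \omega(k)v_1^i$. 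Substituting the representation-theoretic expression for $\omega$ into the left side of the functional equation and pulling the $h$-sum inside gives $\frac{1}{|H|}\sum_h \omega(ghk) = \langle v_1^i \mid \rho^i(g)\,P\,\rho^i(k)v_1^i\rangle = \omega(k)\langle v_1^i\mid \rho^i(g)v_1^i\rangle = \omega(g)\omega(k)$, as required.

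The converse is the substantial direction. Let $\omega$ be bi-$H$-invariant, nonzero, and satisfy the functional equation. First, putting $g = e$ and using left $H$-invariance collapses the sum to $\omega(k) = \omega(e)\omega(k)$ for all $k$, whence $\omega(e) = 1$. The main idea is then to convert the functional equation into an eigenvalue statement for convolution against each spherical function. Concretely, I would multiply the identity by $\overline{\omega_j(k)}$ and sum over $k\in G$. The right side becomes $\langle\omega,\omega_j\rangle\,\omega(g)$. On the left, the substitution $m = ghk$ together with the left $H$-invariance of $\omega_j$ collapses the $h$-average, and property (2) of Proposition \ref{12}, in the form $\overline{\omega_j(g^{-1}m)} = \omega_j(m^{-1}g)$, identifies what remains as the convolution $(\omega * \omega_j)(g) = \sum_m \omega(m)\omega_j(m^{-1}g)$. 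This yields, for every $j$,
\[ \omega * \omega_j = \langle \omega, \omega_j\rangle\,\omega, \]
so that $\omega$ is a simultaneous eigenvector for right-convolution by all the $\omega_j$.

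To finish, I would expand $\omega = \sum_{i} c_i \omega_i$ in the orthogonal basis of $C(H \backslash G / H)$ furnished by Corollary \ref{13}. Feeding this into the eigen-relation and using Proposition \ref{12}(3), the left side equals $c_j \frac{|G|}{\dim V_j}\omega_j$, while the right side equals $\langle\omega,\omega_j\rangle\sum_i c_i\omega_i$; since $\langle\omega,\omega_j\rangle = c_j\,|G|/\dim V_j$ by the orthogonality relations of Corollary \ref{13}, comparing the coefficients of $\omega_i$ for $i\neq j$ forces $c_i c_j = 0$. Thus at most one coefficient $c_{i_0}$ is nonzero, and then $\omega(e) = c_{i_0} = 1$ identifies $\omega = \omega_{i_0}$ as a zonal spherical function. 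The only delicate point in the whole argument is the change of variables that turns the $H$-averaged triple product into a clean convolution; once the relation $\omega * \omega_j = \langle\omega,\omega_j\rangle\,\omega$ is in hand, the remainder is routine bookkeeping with the orthogonality already established.
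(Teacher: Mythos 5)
Your proof is correct. Note first that the paper offers no proof of this theorem to compare against: the result is imported from the cited book (the tombstone at the end of the displayed equation marks it as quoted without proof), so the relevant comparison is with the textbook argument. Your forward direction is the standard one: the averaging operator $P=\frac{1}{|H|}\sum_{h\in H}\rho^i(h)$ is the orthogonal projection onto the line $\mathbb{C}v_1^i$ (one-dimensional by the Frobenius reciprocity argument in the paper's preliminaries), and inserting it between $\rho^i(g)$ and $\rho^i(k)$ gives the functional equation. For the converse, the textbook takes a more structural route: it shows the functional equation is equivalent to $\omega$ being a simultaneous eigenvector for convolution by \emph{every} bi-$H$-invariant function, i.e.\ to $f\mapsto\lambda_f$ defining a character of the commutative algebra $C(H\backslash G/H)$, and then identifies such characters with the matrix coefficients. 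You instead convolve only against the finitely many spherical functions $\omega_j$ themselves, deriving $\omega*\omega_j=\langle\omega,\omega_j\rangle\,\omega$, and then finish by expanding $\omega=\sum_i c_i\omega_i$ in the orthogonal basis of Corollary \ref{13} and comparing coefficients using Proposition \ref{12}(3), which together with $\omega(e)=1$ (from setting $g=e$) forces $\omega=\omega_{i_0}$. Your route is more economical in this finite setting, since it uses nothing beyond the orthogonality relations and convolution identities already recorded in the paper; the character viewpoint is heavier machinery but explains structurally why there are exactly $|H\backslash G/H|$ spherical functions and carries over to compact Gelfand pairs. The only points worth making explicit in a written version are (i) the linear independence of the $\omega_i$, needed when you compare coefficients, which follows from their orthogonality, and (ii) that $\omega\neq 0$ guarantees some $c_{i_0}\neq 0$; both are at hand, so the argument closes.
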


\subsection{Complex reflection groups}
In this section we will introduce the complex reflection groups $G(r,d,n)$. More information on these groups can be found in Shephard and Todd's article \cite{sheptodd}. We begin by defining $G(r,1,n)$. From now on $\xi = \exp(\frac{2 \pi i}{r})$ and $C_r = \langle \xi \rangle \cong \mathbb{Z} / r \mathbb{Z}$.

\begin{definition}
$G(r,1,n) = C_r \wr S_n : = C_r^n \rtimes S_n$ where if $\sigma \in S_n$ and $(\xi_1, \dots, \xi_n) \in C_r^n$ then $\sigma (\xi_1, \dots, \xi_n) = (\xi_{\sigma^{-1}(1)}, \dots, \xi_{\sigma^{-1}(n)})$.
\end{definition}

\begin{remark}
This group can be represented by the set of monomial matrices that have $r$-th roots of unity as entries. In the case of $r = 2$ the group $G(2,1,n) = H_n$ is the hyperoctahedral group, the Weyl group of type $B_n$ \cite{humphreys}.
\end{remark}

We see that $|G(r,1,n)| = r^nn!$. We now move on to the definition of $G(r,d,n)$ which is defined when $d | r$ and can be realised as a subgroup of $G(r,1,n)$. We set $p = \frac{r}{d}$ and remark that $\langle \xi^p \rangle = \langle \zeta \rangle = C_d$ for $\zeta = \exp(\frac{2 \pi i}d)$.

\begin{definition}
If $\Phi_r^d$ denotes the surjective group homomorphism:
$$\Phi_r^d: G(r,1,n) \rightarrow C_d$$
$$(\xi_1, \dots, \xi_n, \sigma) \mapsto (\xi_1 \dots \xi_n)^p$$
for $d|r$, the group $G(r,d,n)$ is defined as $\mathrm{Ker}(\Phi_r^d)$.
\end{definition}

\begin{remark}
A more direct realisation is given by:
$$G(r,d,n) = \{(\xi^{a_1}, \dots, \xi^{a_n}, \sigma) \in G(r,1,n) \ | \ a_1 + \dots + a_n = 0 \ \mathrm{mod} \ d\}.$$
\end{remark}

Important examples of these groups include $G(2,2,n)$, the Weyl group of type $D_n$, and $G(r,r,2)$, which is the dihedral group of order $2r$ as well as the Weyl group of type $I_2(r)$ \cite{humphreys}. We remark that $G(r,d,n)$ is normal and: 

$$G(r,1,n) / G(r,d,n) \cong C_d.$$

From here it follows that $|G(r,d,n)| = \frac{r^n}{d}n!$. We have $S_n \subseteq G(r,d,n)$ as a subgroup if we set $S_n = \{(1, \dots, 1, \sigma) \in G(r,d,n) \ | \ \sigma \in S_n \}$. We will study $(G(r,d,n), S_n)$, which will turn out to be a Gelfand pair. First we obtain some results about the representatives of the left cosets and double cosets of  $G(r,d,n)$ by $S_n$, contained in $G(r,d,n) / S_n$ and $S_n \backslash G(r,d,n) / S_n$.

\begin{proposition}
\mbox{}

(1) A complete set of representatives for the left cosets of $S_n$ in $G(r,d,n)$ is given by $\{(\xi_1, \dots, \xi_n, id) \in G(r,d,n)\}$.

(2) A complete set of representatives for the double cosets of $S_n$ in $G(r,d,n)$ is given by:
$$\{(\underbrace{1, \dots, 1}_{\mathit{l}_0}, \dots, \underbrace{\xi^{r-1}, \dots, \xi^{r-1}}_{\mathit{l}_{r-1}}, id) \in G(r,d,n) \ | \ \sum_{i = 0}^{r-1} \mathit{l}_i = n, \sum_{i = 0}^{r-1} i \mathit{l}_i = 0 \ \mathrm{mod} \ d \}$$
These representatives can be represented by an $r$-tuple $(\mathit{l}_0, \dots, \mathit{l}_{r-1})$ such that $\sum_{i=0}^{r-1} \mathit{l}_i = n$ and $\sum_{i = 0}^n i\mathit{l}_i \equiv 0 \ \mathrm{mod} \ d$.
\label{16}
\end{proposition}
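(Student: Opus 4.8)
The plan is to work directly with the semidirect-product multiplication law. Writing an element of $G(r,1,n)$ as $(\boldsymbol\xi,\sigma)$ with $\boldsymbol\xi = (\xi_1,\dots,\xi_n) \in C_r^n$, the product reads $(\boldsymbol\xi,\sigma)(\boldsymbol\eta,\tau) = (\boldsymbol\xi \cdot \sigma(\boldsymbol\eta), \sigma\tau)$, where $\sigma(\boldsymbol\eta)$ permutes coordinates as in the definition of the wreath product. The two facts I would single out are that $S_n = \{(\mathbf 1,\tau) : \tau \in S_n\}$ and that $\sigma$ fixes the identity tuple $\mathbf 1 = (1,\dots,1)$. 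Everything else is a bookkeeping consequence of these.

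For part (1), I would compute a left coset explicitly. Using $\sigma(\mathbf 1) = \mathbf 1$ and the fact that $\sigma\tau$ runs over all of $S_n$ as $\tau$ does, one gets $(\boldsymbol\xi,\sigma)S_n = \{(\boldsymbol\xi\cdot\sigma(\mathbf 1),\sigma\tau) : \tau \in S_n\} = \{(\boldsymbol\xi,\rho) : \rho \in S_n\}$. Hence a left coset is determined precisely by its $C_r^n$-component $\boldsymbol\xi$, and $(\boldsymbol\xi,id)$ is a representative; conversely distinct tuples give distinct cosets since the $S_n$-component $id$ forces equality of the tuples. Requiring these representatives to lie in $G(r,d,n)$ singles out exactly the tuples with $\sum_i a_i \equiv 0 \bmod d$ (where $\xi_i = \xi^{a_i}$), which is the claimed set. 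As a sanity check for completeness I would confirm the count: there are $r^n/d$ such tuples, matching $|G(r,d,n)|/|S_n| = (r^n n!/d)/n!$.

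For part (2), I would bring in left multiplication by $S_n$ as well, using $(\mathbf 1,\tau)(\boldsymbol\xi,\rho) = (\tau(\boldsymbol\xi),\tau\rho)$. Combining this with the left-coset computation yields $S_n(\boldsymbol\xi,\sigma)S_n = \{(\tau(\boldsymbol\xi),\mu) : \tau,\mu \in S_n\}$, so the double coset depends only on the $S_n$-orbit of $\boldsymbol\xi$ under coordinate permutation. Such an orbit is determined by the multiset of exponents, equivalently by the counts $l_i = \#\{j : a_j = i\}$ for $0 \le i \le r-1$, which satisfy $\sum_i l_i = n$; the sorted tuple displayed in the statement is the canonical representative. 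Finally the constraint $\sum_j a_j \equiv 0 \bmod d$ translates into $\sum_i i\,l_i \equiv 0 \bmod d$, giving exactly the parametrisation by $r$-tuples $(l_0,\dots,l_{r-1})$.

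The computations are routine, so I do not anticipate a serious obstacle; the points needing care are keeping the action conventions straight — right multiplication by $S_n$ leaves $\boldsymbol\xi$ untouched whereas left multiplication permutes its coordinates — and verifying non-redundancy, namely that distinct admissible tuples give distinct left cosets and distinct count-vectors $(l_0,\dots,l_{r-1})$ give distinct double cosets. Both follow from the explicit descriptions above, and the main practical risk is simply an inverse slip in the coordinate permutation $\sigma(\boldsymbol\eta)$.
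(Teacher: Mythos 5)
Your proposal is correct and follows essentially the same route as the paper: right multiplication by $S_n$ fixes the $C_r^n$-component (so left cosets are parametrised by tuples), left multiplication permutes coordinates (so double cosets are parametrised by the counts $l_i$), and membership in $G(r,d,n)$ gives the congruence $\sum_i i\,l_i \equiv 0 \bmod d$. You merely spell out the non-redundancy checks and the cardinality sanity check that the paper leaves implicit.
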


\begin{proof}
(1): This can be seen by remarking that $(\xi_1, \dots, \xi_n, \sigma) \cdot \tau = (\xi_1, \dots, \xi_n, \sigma \tau)$. This means that $(\xi_1, \dots, \xi_n, \sigma)S_n = \{(\xi_1, \dots, \xi_n, \tau) \ | \ \tau \in S_n\}$.

(2): A left coset is described by a $(\xi_1, \dots, \xi_n, id) \in G(r,d,n)$. Note that a double coset is completely determined by the number of $\xi^i$ for $0 \leq i \leq r-1$ in an element of the coset. We call the number of times $\xi^i$ appears $\mathit{l}_i$. We obtain $\sum_{i = 0}^{r-1} \mathit{l}_i = n$. Because $(\xi^{a_1}, \dots, \xi^{a_n}, \sigma) \in G(r,d,n)$ it is also necessary that $\sum_{i = 0}^{r-1} i \mathit{l}_i = 0 \ \mathrm{mod} \ d$.
\end{proof}

\begin{proposition}
$(G(r,d,n), S_n)$ is a (weakly symmetric) Gelfand pair.
\label{17}
\end{proposition}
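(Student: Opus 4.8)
The plan is to invoke Gelfand's Lemma (Lemma \ref{11}) by exhibiting a suitable automorphism $\tau$ of $G(r,d,n)$ witnessing the weakly symmetric property $g^{-1} \in S_n \tau(g) S_n$ for every $g$. The natural candidate is the automorphism induced by complex conjugation on the diagonal entries, i.e. $\tau(\xi_1, \dots, \xi_n, \sigma) = (\xi_1^{-1}, \dots, \xi_n^{-1}, \sigma)$. I would first verify that $\tau$ is a well-defined automorphism: it clearly preserves the semidirect product structure of $G(r,1,n)$ since inverting each root of unity commutes with the permutation action (the $S_n$-action just permutes coordinates, and $\sigma(\xi_1^{-1}, \dots, \xi_n^{-1}) = (\xi_{\sigma^{-1}(1)}^{-1}, \dots)$), and it restricts to $G(r,d,n)$ because the defining condition $\sum a_i \equiv 0 \bmod d$ is preserved under $a_i \mapsto -a_i$ (as $\sum(-a_i) = -\sum a_i \equiv 0$).

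Next I would check the key coset condition. Given $g = (\xi^{a_1}, \dots, \xi^{a_n}, \sigma) \in G(r,d,n)$, I would compute $g^{-1}$ explicitly. Writing the inverse in the semidirect product, $g^{-1} = (\sigma^{-1}(\xi^{-a_1}, \dots, \xi^{-a_n}), \sigma^{-1})$, which after unwinding the $S_n$-action has diagonal part a permutation of $(\xi^{-a_1}, \dots, \xi^{-a_n})$ paired with $\sigma^{-1}$. On the other hand, $\tau(g) = (\xi^{-a_1}, \dots, \xi^{-a_n}, \sigma)$, so $S_n \tau(g) S_n$ consists of all elements of the form $(\text{some permutation of } \xi^{-a_1}, \dots, \xi^{-a_n}, \pi)$ for $\pi$ ranging suitably. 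The point is that multiplying by $S_n$ on both sides allows one to permute the diagonal entries arbitrarily and to realise any conjugate permutation, so I would argue that $g^{-1}$, whose diagonal entries form exactly the multiset $\{\xi^{-a_i}\}$, lands in the double coset $S_n \tau(g) S_n$. Concretely, since a double coset of $S_n$ is determined precisely by the multiset of diagonal entries (by Proposition \ref{16}(2)), and both $g^{-1}$ and $\tau(g)$ have diagonal multiset $\{\xi^{-a_i}\}_{i=1}^n$, they lie in the same double coset.

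This last observation is really the crux and makes the verification clean: by Proposition \ref{16}(2) the double coset of any element $(\xi_1, \dots, \xi_n, \sigma)$ depends only on the unordered collection of roots of unity $\{\xi_1, \dots, \xi_n\}$, independent of $\sigma$ and of the ordering. Since $g^{-1}$ and $\tau(g)$ share the same multiset $\{\xi^{-a_i}\}$ of diagonal entries, they automatically lie in the same $S_n$-double coset, giving $g^{-1} \in S_n \tau(g) S_n$. Applying Lemma \ref{11} then yields that $(G(r,d,n), S_n)$ is a weakly symmetric Gelfand pair.

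I expect the main obstacle to be the bookkeeping in the explicit computation of $g^{-1}$ within the semidirect product and confirming that its diagonal multiset is indeed $\{\xi^{-a_i}\}$ rather than something twisted by $\sigma$; once it is clear that double cosets are insensitive to both the permutation part and the ordering of diagonal entries, the argument reduces to the trivial matching of multisets. I would take care to state the inverse formula in $C_r^n \rtimes S_n$ correctly, since the $S_n$-action on $C_r^n$ introduces the permutation of indices that must be tracked, but this only reshuffles the entries and does not change their multiset, so no genuine difficulty arises beyond careful notation.
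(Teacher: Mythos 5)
Your proposal is correct and takes essentially the same route as the paper: the same inversion automorphism $(\xi_1,\dots,\xi_n,\sigma)\mapsto(\xi_1^{-1},\dots,\xi_n^{-1},\sigma)$ restricted from $G(r,1,n)$ to $G(r,d,n)$, the same computation $g^{-1}=(\xi^{-a_{\sigma(1)}},\dots,\xi^{-a_{\sigma(n)}},\sigma^{-1})$ in the semidirect product, and the same appeal to Gelfand's Lemma (Lemma \ref{11}). Your explicit use of Proposition \ref{16}(2) to argue that equal multisets of diagonal entries force membership in the same double coset simply spells out what the paper's proof leaves implicit.
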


\begin{proof}
$(G(r,d,n), S_n)$ is a weakly symmetric Gelfand pair if there is a group automorphism $f$ such that $g^{-1} \in S_n f(g) S_n$ by Lemma \ref{11}.
Consider the group automorphism:
$$f: G(r,1,n) \rightarrow G(r,1,n)$$
$$(\xi_1, \dots, \xi_n, \sigma) \mapsto (\xi_1^{-1}, \dots, \xi_n^{-1}, \sigma).$$
By restriction to $G(r,d,n)$ we find a group automorphism of $G(r,d,n)$. Notice that:
$$(\xi^{a_1}, \dots, \xi^{a_n}, \sigma)^{-1} = (\xi^{-a_{\sigma(1)}}, \dots, \xi^{-a_{\sigma(n)}}, \sigma^{-1}) \in S_n f((\xi^{a_1}, \dots, \xi^{a_n}, \sigma)) S_n.$$
Hence the condition of Lemma \ref{11} is satisfied and the proposition is proven.
\end{proof}

We are now ready to state the irreducible representations making up $1_{S_n}^{G(r,1,n)}$ as stated by Mizukawa \cite{mizukawa04}. The group $G(r,d,n)$ acts on the ring of polynomials in $n$ complex variables as:
$$(\xi_1, \dots, \xi_n, \sigma)f(x_1, \dots, x_n) = f(\xi_{\sigma(1)}^{-1}x_{\sigma(1)}, \dots, \xi_{\sigma(n)}^{-1}x_{\sigma(n)}).$$
There is a map from $\mathbb{N}^r$ to the set of partitions:
$$\psi: \mathbb{N}^r \ni (\mathit{k}_0, \dots, \mathit{k}_{r-1}) \mapsto (\underbrace{0, \dots, 0}_{\mathit{k}_0}, \dots, \underbrace{r-1, \dots, r-1}_{\mathit{k}_{r-1}}).$$

\begin{proposition}[{\cite[Proposition 3.2]{mizukawa04}}]
The induced representation $1_{S_n}^{G(r,1,n)}$ is decomposed as:
$$1_{S_n}^{G(r,1,n)} \cong \bigoplus_{\{(\mathit{k}_0, \dots, \mathit{k}_{r-1}) \ | \ \sum_{i=0}^{r-1} \mathit{k}_i = n \}} V^{(\mathit{k}_0, \dots, \mathit{k}_{r-1})}.$$
Here $V^{(\mathit{k}_0, \dots, \mathit{k}_{r-1})}$ is an irreducible $G(r,1,n)$-module realised as:
$$V^{(\mathit{k}_0, \dots, \mathit{k}_{r-1})} = \bigoplus_{f \in M_n(\psi(\mathit{k}_0, \dots, \mathit{k}_{r-1}))} \mathbb{C}f.$$
Here $M_n(\lambda) = \{x_{\sigma(1)}^{\lambda_1} \dots x_{\sigma(n)}^{\lambda_n} \ | \ \sigma \in S_n \}$ for $\lambda = (\lambda_1, \dots, \lambda_n)$. \qed
\label{18}
\end{proposition}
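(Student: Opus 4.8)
The plan is to exhibit the right-hand side as the space $W$ spanned by all monomials $x_1^{a_1}\cdots x_n^{a_n}$ with $0\le a_i\le r-1$, and to match it against $1_{S_n}^{G(r,1,n)}\cong C(G(r,1,n)/S_n)$ by a combination of Frobenius reciprocity and a dimension count. First I would record the two dimensions: since $|G(r,1,n)|=r^n n!$ we have $[G(r,1,n):S_n]=r^n$, so $\dim 1_{S_n}^{G(r,1,n)}=r^n$; on the other side, $V^{(k_0,\dots,k_{r-1})}$ is the span of the single $S_n$-orbit $M_n(\psi(k_0,\dots,k_{r-1}))$ of monomials, hence is $G(r,1,n)$-stable (the base group $C_r^n$ scales each monomial and $S_n$ permutes the orbit), with $\dim V^{(k_0,\dots,k_{r-1})}=\binom{n}{k_0,\dots,k_{r-1}}$. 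By the multinomial theorem $\sum_{\sum k_i=n}\binom{n}{k_0,\dots,k_{r-1}}=r^n$, so the candidate decomposition already has the correct total dimension; it remains to identify the summands inside $1_{S_n}^{G(r,1,n)}$.

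The crux is the irreducibility of each $V^{(k_0,\dots,k_{r-1})}$, which I would establish by Clifford theory relative to the normal subgroup $N=C_r^n$. On the monomial basis $N$ acts diagonally through the characters $\chi_a\colon(\xi_1,\dots,\xi_n)\mapsto\prod_i\xi_i^{-a_i}$, and distinct exponent vectors in $\{0,\dots,r-1\}^n$ give distinct characters, so $V^{(k_0,\dots,k_{r-1})}|_N$ is multiplicity-free with each line $\mathbb{C}\,x_1^{a_1}\cdots x_n^{a_n}$ an $N$-isotypic component. Since $M_n(\psi(k_0,\dots,k_{r-1}))$ is one $S_n$-orbit, $S_n$ permutes these lines transitively. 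Now any nonzero $G(r,1,n)$-submodule $U$, being $N$-stable, must be a sum of some of these lines, and $G(r,1,n)$-invariance forces the set of lines occurring in $U$ to be stable under $S_n$; as there is only one orbit, $U$ is everything. Hence $V^{(k_0,\dots,k_{r-1})}$ is irreducible. The same character bookkeeping shows that the multiset type $(k_0,\dots,k_{r-1})$ is recovered from the set of $N$-characters occurring, so the modules for different types are pairwise non-isomorphic.

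Finally I would apply Frobenius reciprocity: as an $S_n$-module each $V^{(k_0,\dots,k_{r-1})}$ is the permutation module on $M_n(\psi(k_0,\dots,k_{r-1}))$, whose space of $S_n$-invariants is one-dimensional (spanned by the sum of the orbit), so $\langle 1_{S_n}^{G(r,1,n)},V^{(k_0,\dots,k_{r-1})}\rangle=\dim(V^{(k_0,\dots,k_{r-1})})^{S_n}=1$. Thus each of these pairwise non-isomorphic irreducibles occurs exactly once in $1_{S_n}^{G(r,1,n)}$, so their direct sum embeds in it; since both sides have dimension $r^n$, the embedding is an isomorphism. I expect the irreducibility step to be the main obstacle; a more hands-on alternative would be to build an explicit $G(r,1,n)$-equivariant isomorphism $C(G(r,1,n)/S_n)\cong W$ by the Fourier transform on $C_r^n$ (sending the coset of $(\xi,\mathrm{id})$ to the character $\chi_a$, using the representatives of Proposition \ref{16}), where the only real work is to verify equivariance against the twisted translation action on cosets.
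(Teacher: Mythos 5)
Your proof is correct; note that there is no proof in the paper to compare it against, since Proposition \ref{18} is quoted from \cite[Proposition 3.2]{mizukawa04} with the argument deferred entirely to Mizukawa, so your write-up is a self-contained substitute rather than a variant of anything in the text. Every step checks out: the $G(r,1,n)$-stability of $V^{(k_0,\dots,k_{r-1})}$ and the counts $\dim V^{(k_0,\dots,k_{r-1})}=\binom{n}{k_0,\dots,k_{r-1}}$ and $\sum\binom{n}{k_0,\dots,k_{r-1}}=r^n=[G(r,1,n):S_n]$ are right; the Clifford-theory argument for irreducibility is sound because the restriction to the normal abelian subgroup $C_r^n$ is multiplicity-free (distinct monomials have distinct exponent vectors in $\{0,\dots,r-1\}^n$, hence afford distinct characters), so every nonzero submodule is a sum of monomial lines, and $S_n$-transitivity on $M_n(\psi(k_0,\dots,k_{r-1}))$ forces such a submodule to be everything; recovering $(k_0,\dots,k_{r-1})$ from the set of $C_r^n$-characters correctly gives pairwise non-isomorphism, which your final step genuinely needs (multiplicity one for each summand alone would not let the direct sum embed if two summands were isomorphic); and Frobenius reciprocity together with the one-dimensional invariants of a transitive permutation module, plus the dimension count, closes the argument. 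As for what each choice buys: the paper's citation keeps its preliminaries purely expository, while your proof makes the result self-contained at modest cost, and its central device --- restricting to a normal subgroup and splitting into character eigenspaces --- is exactly the technique the paper itself deploys later, in Proposition \ref{isom} and in the proof that $\dim W^{(k_0,\dots,k_{r-1})}=\dim V^{(k_0,\dots,k_{r-1})}/|\Gamma_{(k_0,\dots,k_{r-1})}|$, so your argument would sit naturally alongside the rest of the text. The alternative you sketch at the end (an explicit equivariant Fourier transform identifying $C(G(r,1,n)/S_n)$ with the span of all monomials) is also viable, but it is more work than the route you actually completed and can be omitted.
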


Before we can give the zonal spherical functions of $(G(r,1,n), S_n)$ we need to define a generalised class of hypergeometric functions, originally introduced by Aomoto and Gelfand.

\begin{definition}
An $(n+1,m+1)$-hypergeometric functions is defined as:
$$F(\alpha, \beta, \gamma, X) = \\ \sum_{(a_{ij}) \in M_{n,m-n-1}(\mathbb{N})}\frac{\prod_{i=1}^n (\alpha_i)_{\sum_{j=1}^{m-n-1}a_{ij}} \prod_{i=1}^{m-n-1}(\beta_i)_{\sum_{j=1}^n a_{ji}}}{(\gamma)_{\sum_{i,j} a_{ij}}} \frac{\prod_{i,j} X_{ij}^{a_{ij}}}{\prod_{i,j} a_{ij}!}.$$
\label{defhgf}
\end{definition}

Here $X \in M_{n,m-n-1}(\mathbb{C})$, $\alpha$ is an $n$-tuple, $\beta$ is an $(m-n-1)$-tuple and $\gamma \in \mathbb{C}$. We adopt the convention that $\gamma$ is a negative integer and we sum over the $(a_{ij})_{i,j}$ such that $\\ \sum_{i,j} a_{ij} \leq -\gamma$, so the above sum is finite and we will not have to worry about convergence issues. Note that $(2,4)$-hypergeometric functions give the usual Gauss hypergeometric functions ${}_2 F_1$. We are now ready to give the spherical functions for the Gelfand pair $(G(r,1,n),S_n)$, first calculated by Mizukawa \cite{mizukawa04}. They are parametrised by the $(\mathit{k}_0, \dots, \mathit{k}_{r-1}) \in \mathbb{N}^r$ such that $\sum_{i=0}^{r-1} \mathit{k}_i = n$. We denote a zonal spherical function indexed by $(\mathit{k}_0, \dots, \mathit{k}_{r-1})$ evaluated on a double coset parametrised by $(\mathit{l}_0, \dots, \mathit{l}_{r-1})$ as $\omega^{(\mathit{k}_0, \dots, \mathit{k}_{r-1})}_{(\mathit{l}_0, \dots, \mathit{l}_{r-1})}$.

\begin{theorem}[{\cite[Theorem 4.6]{mizukawa04}}]
The zonal spherical functions of $(G(r,1,n), S_n)$ have the $(n+1,m+1)$-hypergeometric expressions:
$$\omega^{(\mathit{k}_0, \dots, \mathit{k}_{r-1})}_{(\mathit{l}_0, \dots, \mathit{l}_{r-1})} = F((-\mathit{l}_1, \dots, -\mathit{l}_{r-1}),(-\mathit{k}_1, \dots, -\mathit{k}_{r-1}), -n,  \widetilde{\Xi}_{r}).$$
Here $\widetilde{\Xi}_{r} = (1-\xi^{ij})_{1 \leq i,j \leq r-1}$. \qed
\label{19}
\end{theorem}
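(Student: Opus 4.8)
The plan is to compute $\omega^{(k_0,\dots,k_{r-1})}_{(l_0,\dots,l_{r-1})}$ directly from Definition \ref{defspher}, using the explicit monomial model of $V^{(k_0,\dots,k_{r-1})}$ furnished by Proposition \ref{18}, and then to recognise the resulting combinatorial sum as an $(n+1,m+1)$-hypergeometric function. First I would locate the spherical vector. Equip $V^{(k_0,\dots,k_{r-1})}$ with the Hermitian inner product making the monomial basis $M_n(\psi(k_0,\dots,k_{r-1}))$ orthonormal; since $G(r,1,n)$ permutes these monomials up to roots of unity, this inner product is $G$-invariant. The subgroup $S_n$ merely permutes the variables, hence permutes the monomial basis transitively, so the line of $S_n$-invariants is spanned by the symmetrisation $v_1 = N^{-1/2}\sum_f f$, where the sum runs over $M_n(\psi(k_0,\dots,k_{r-1}))$ and $N=\binom{n}{k_0,\dots,k_{r-1}}$.

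Next I would evaluate $\omega$ on the double-coset representative $g=(\xi^{c_1},\dots,\xi^{c_n},\mathrm{id})$ attached to $(l_0,\dots,l_{r-1})$ in Proposition \ref{16}, so that exactly $l_i$ of the $c_p$ equal $i$. Because $\sigma=\mathrm{id}$, the action acts diagonally on each monomial, $g\cdot x_1^{a_1}\cdots x_n^{a_n}=\xi^{-\sum_p c_p a_p}\,x_1^{a_1}\cdots x_n^{a_n}$, so Definition \ref{defspher} gives $\omega^{(k)}_{(l)}=\langle v_1 | g v_1\rangle = N^{-1}\sum_f \xi^{\langle c,a_f\rangle}$, the sum running over all monomials $f$ with exponent vector $a_f$ (the inner-product convention fixing the positive sign). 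I would then record, for each monomial, the overlap statistics $m_{ij}=\#\{p: c_p=i,\ (a_f)_p=j\}$, a nonnegative integer matrix with row sums $l_i$ and column sums $k_j$; the number of monomials realising a given $(m_{ij})$ is $\prod_i \binom{l_i}{m_{i0},\dots,m_{i,r-1}}$ and the phase is $\xi^{\sum_{i,j} ij\,m_{ij}}$. Since the rows and columns indexed by $0$ carry no phase, only the interior block $a_{ij}:=m_{ij}$ for $1\le i,j\le r-1$ matters, and the marginal relation $\sum_i l_i=n=\sum_j k_j$ shows this block together with the row and column sums determines the whole table. Hence
$$\omega^{(k)}_{(l)}=\frac{1}{\binom{n}{k_0,\dots,k_{r-1}}}\sum_{(m_{ij})}\Big(\prod_{i=0}^{r-1}\binom{l_i}{m_{i0},\dots,m_{i,r-1}}\Big)\prod_{1\le i,j\le r-1}(\xi^{ij})^{a_{ij}}.$$

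The main work, and the step I expect to be the obstacle, is to turn this constrained contingency-table sum into the free hypergeometric sum of Definition \ref{defhgf} with argument $\widetilde{\Xi}_r=(1-\xi^{ij})$. The mechanism is to expand each phase via $(\xi^{ij})^{a_{ij}}=\big(1-(1-\xi^{ij})\big)^{a_{ij}}$ (equivalently, to sum out the boundary rows $m_{i0}$ and columns $m_{0j}$ by a multivariate Chu--Vandermonde convolution); rewriting the binomials through $\binom{x}{m}=(-1)^m(-x)_m/m!$ then assembles exactly the factors $(-l_i)_{\sum_j a_{ij}}$ and $(-k_j)_{\sum_i a_{ij}}$ in the numerator and $(-n)_{\sum_{i,j}a_{ij}}$ in the denominator, producing the powers $(1-\xi^{ij})^{a_{ij}}/a_{ij}!$. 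Two points need care: the Pochhammer symbols $(-l_i)_s$ and $(-k_j)_s$ vanish for $s>l_i$ respectively $s>k_j$, which is precisely what lets the free sum over all $(a_{ij})$ with $\sum a_{ij}\le n$ reproduce the support $m_{ij}\ge 0$ of the contingency table; and the bookkeeping of the corner entry $m_{00}$ must be checked for consistency, which is again where $\sum_i l_i=\sum_j k_j$ is used.

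As a sanity check, for $r=2$ the whole construction collapses to the classical identity $\binom{n}{k_1}^{-1}\sum_a(-1)^a\binom{l_1}{a}\binom{n-l_1}{k_1-a}={}_2F_1(-l_1,-k_1;-n;2)$, i.e. the Krawtchouk polynomial, with $1-\xi^{11}=1-(-1)=2$, recovering the known spherical functions of the Weyl group of type $B_n$. Assembling the general computation identifies $\omega^{(k_0,\dots,k_{r-1})}_{(l_0,\dots,l_{r-1})}$ with $F((-l_1,\dots,-l_{r-1}),(-k_1,\dots,-k_{r-1}),-n,\widetilde{\Xi}_r)$, which is the claimed formula.
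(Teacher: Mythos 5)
This theorem is quoted by the paper from Mizukawa's article (Theorem 4.6 there) and stated with no proof at all, so there is no in-paper argument to compare against; what you have written is in substance a reconstruction of the cited proof, and it is correct. Your three preliminary steps are all sound: the monomial basis of Proposition \ref{18} is permuted up to $r$-th roots of unity by $G(r,1,n)$, so declaring it orthonormal gives a $G$-invariant inner product; $S_n$ acts transitively on $M_n(\psi(\mathit{k}_0,\dots,\mathit{k}_{r-1}))$, so the invariant line is the symmetrisation; and on the diagonal representative of Proposition \ref{16} the matrix coefficient of Definition \ref{defspher} is exactly your contingency-table sum (your remark on the conjugation convention is what produces the phase $\xi^{+\sum ij\,m_{ij}}$ and hence the matrix $\widetilde{\Xi}_r=(1-\xi^{ij})$ rather than $(1-\xi^{-ij})$). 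The step you flag as the obstacle does go through precisely as you sketch it: expanding $\xi^{ijm_{ij}}=(1-(1-\xi^{ij}))^{m_{ij}}$ and using
$$\binom{\mathit{l}_i}{m_{i0},\dots,m_{i,r-1}}\prod_{j\ge 1}\binom{m_{ij}}{a_{ij}}=\frac{\mathit{l}_i!}{(\mathit{l}_i-A_i)!\,\prod_{j\ge1}a_{ij}!}\binom{\mathit{l}_i-A_i}{m_{i0},m_{i1}-a_{i1},\dots,m_{i,r-1}-a_{i,r-1}},\quad A_i=\sum_{j\ge1}a_{ij},$$
the sum over the leftover table with row margins $\mathit{l}_i-A_i$ and column margins $\mathit{k}_j-B_j$ (where $B_j=\sum_{i\ge1}a_{ij}$) collapses by the multivariate Chu--Vandermonde identity to $\binom{n-|A|}{\mathit{k}_0,\mathit{k}_1-B_1,\dots,\mathit{k}_{r-1}-B_{r-1}}$; the factorial ratios $\mathit{l}_i!/(\mathit{l}_i-A_i)!$, $\mathit{k}_j!/(\mathit{k}_j-B_j)!$ and $n!/(n-|A|)!$ convert to $(-\mathit{l}_i)_{A_i}$, $(-\mathit{k}_j)_{B_j}$ and $(-n)_{|A|}$ with all signs cancelling because $\sum_iA_i=\sum_jB_j=|A|$, yielding exactly the summand of Definition \ref{defhgf}. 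Your observation that the vanishing of the Pochhammer symbols reconciles the constrained table sum with the free hypergeometric sum is also the right (and necessary) point. The only criticism is that this conversion, which you describe in prose and label as the expected obstacle, \emph{is} the entire proof, so in a final write-up it must be carried out explicitly as above rather than asserted; once that is done, your argument is complete, and it gives this article something it currently lacks, namely a self-contained derivation of the formula it imports.
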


From now on, we will write $\mathit{k} = (\mathit{k}_1, \dots \mathit{k}_{r-1})$ and $\mathit{l} = (\mathit{l}_1, \dots, \mathit{l}_{r-1})$.

\section{Spherical functions for $(G(r,d,n), S_n)$}

We will calculate the spherical functions of the Gelfand pair $(G(r,d,n), S_n)$ using our knowledge of the spherical functions of the Gelfand pair $(G(r,1,n), S_n)$.

\begin{lemma}
Suppose $(G,H)$ is a Gelfand pair, $\omega$ a spherical function of this Gelfand pair and $K \subseteq G$ is a subgroup of $G$ such that $H \subseteq K$ and $(K,H)$ is a Gelfand pair. Then $\omega |_K$ is a spherical function for the Gelfand pair $(K,H)$. All spherical functions of $(K,H)$ are obtained in this way.
\label{restriction}
\end{lemma}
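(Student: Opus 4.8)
The plan is to use the functional-equation characterisation of spherical functions from Theorem \ref{15} for the first assertion, and a spanning/basis argument for the surjectivity assertion. The two halves are logically independent, and the first feeds into the second.

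For the first part I would start from the characterisation in Theorem \ref{15}: a non-zero bi-$H$-invariant function $\omega$ on $K$ is spherical for $(K,H)$ if and only if $\frac{1}{|H|}\sum_{h \in H}\omega(ghk) = \omega(g)\omega(k)$ for all $g,k \in K$. Given a spherical function $\omega$ of $(G,H)$, its restriction $\omega|_K$ is bi-$H$-invariant on $K$ because $H \subseteq K$ and $\omega$ is bi-$H$-invariant on $G$. Moreover, for $g,k \in K$ and $h \in H \subseteq K$ we have $ghk \in K$, so the functional equation that $\omega$ satisfies on all of $G$ specialises verbatim to the functional equation for $\omega|_K$ on $K$. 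Finally $\omega|_K$ is non-zero since $\omega(e) = 1$ by Proposition \ref{12}(2) and $e \in K$. Hence Theorem \ref{15}, applied to the Gelfand pair $(K,H)$, shows that $\omega|_K$ is a spherical function of $(K,H)$.

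For the second part I would first record the key structural fact: since $H \subseteq K$ and $K$ is a subgroup, for any $k \in K$ the double coset $HkH$ is contained in $K$, so the double cosets of $(K,H)$ are exactly the double cosets of $(G,H)$ that lie inside $K$; thus $H\backslash K/H$ embeds into $H \backslash G / H$. Consequently the restriction map $\mathrm{res}: C(H\backslash G/H) \to C(H\backslash K/H)$ is a surjective linear map, because the indicator of any double coset contained in $K$ already lies in $C(H\backslash G/H)$ and restricts to the corresponding indicator on $K$, and these indicators span $C(H\backslash K/H)$. I would then combine the ingredients: by Corollary \ref{13} the spherical functions $\{\omega_i^G\}$ form a basis of $C(H\backslash G/H)$, so their restrictions span the image of $\mathrm{res}$, which is all of $C(H\backslash K/H)$. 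By the first part each $\mathrm{res}(\omega_i^G)$ equals one of the spherical functions $\omega_j^K$ of $(K,H)$, and by Corollary \ref{13} again the $\{\omega_j^K\}$ themselves form a basis of $C(H\backslash K/H)$. Thus $\{\mathrm{res}(\omega_i^G)\}_i$ is a subset of this basis that spans the space, and a spanning subset of a basis must be the whole basis; hence every $\omega_j^K$ arises as some $\mathrm{res}(\omega_i^G)$.

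The routine half is the first part; the point requiring care is the surjectivity. I would stress that the restriction map need not be injective—several inequivalent $(G,H)$-constituents $V_i$ can restrict to modules containing the same $(K,H)$-constituent, so a naive comparison of the numbers of double cosets does not settle the claim. The spanning argument circumvents this: it uses only that restriction is surjective on bi-$H$-invariant functions, together with the fact, supplied by the first part, that spherical functions restrict to \emph{single} spherical functions rather than to genuine linear combinations of them.
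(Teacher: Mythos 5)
Your proof is correct, and the two halves fare differently against the paper. The first half coincides with the paper's argument: both verify that $\omega|_K$ is bi-$H$-invariant and non-zero and then invoke the functional equation of Theorem \ref{15}, which for $g,k \in K$ and $h \in H$ only involves elements of $K$. For the second half you take a genuinely different, though closely related, route. The paper starts from a given spherical function $\Omega$ of $(K,H)$, extends it by zero to a bi-$H$-invariant function on $G$ (legitimate precisely because $K$ is a union of $(G,H)$-double cosets, the same structural fact you record), expands that extension as $\sum_i c_i \omega_i$ in the basis of $(G,H)$-spherical functions supplied by Corollary \ref{13}, and restricts back to $K$ to get $\Omega = \sum_i c_i\, \omega_i|_K$; since the $(K,H)$-spherical functions are linearly independent and each $\omega_i|_K$ is one of them by the first half, some $\omega_i|_K$ must equal $\Omega$. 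You instead establish surjectivity of the restriction map $C(H\backslash G/H) \to C(H\backslash K/H)$ via indicators of double cosets, conclude that the restrictions of the $\omega_i$ span $C(H\backslash K/H)$, and finish with the observation that a spanning subset of a basis is the whole basis. The ingredients are identical (part one plus Corollary \ref{13}), but packaged dually: the paper pushes a single function up to $G$ and back down, while you push the whole space of bi-invariant functions down at once. Your version avoids having to check that a zero-extension is bi-$H$-invariant; the paper's is marginally more explicit about which $\omega_i$ restricts to a given $\Omega$. Your closing caution that restriction need not be injective, so that counting double cosets alone would not suffice, is well placed: the paper's Corollary \ref{indexcor} exhibits exactly this phenomenon, with distinct spherical functions of $(G(r,1,n),S_n)$ restricting to the same spherical function of $(G(r,d,n),S_n)$ along $\Gamma$-orbits.
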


\begin{proof}
Notice that if $\omega \in C(G)$ is right and left $H$-invariant implies $\omega|_K \in C(H \backslash K / H) \subseteq C(K)$ is left and right $H$-invariant and $\omega|_K \neq 0$. By Theorem \ref{15} the function $\omega|_K$ satisfies equation \eqref{eq:sumformula} for all $k \in K$ and so it is zonal spherical.

If $\Omega$ is a spherical function for the Gelfand pair $(K,H)$, $\Omega \in C(K)$, we can extend $\Omega$ to $C(G)$ as a bi-$H$-invariant function, which we will call $\omega$. We define $\omega(g) := \Omega(g)$ for $g \in K$ and $\omega(g) := 0$ for $g \in G \backslash K$. Then:
$$\omega = \sum c_i \omega_i.$$
We restrict again to $K$ (in which case $\omega_i |_K = \Omega_i$ are spherical functions). Then:
$$\Omega = \sum c_i \Omega_i.$$
We know that the spherical functions form a basis for $C(H \backslash K / H)$, thus we conclude that $\Omega_i = \Omega$ for one of the zonal spherical functions.
\end{proof}

The spherical functions of $G(r,d,n)$ come from restriction of the spherical functions of $G(r,1,n)$. Let $p = \frac{r}{d}$ and $\gamma = (0 \ 1 \dots r-1)^{p} \in S_r$. Then $\Gamma = \langle \gamma \rangle \cong C_d$. Let $S_r$ act on the $r$-tuple $(\mathit{k}_0, \dots, \mathit{k}_{r-1})$ in the usual way, i.e. $\sigma (\mathit{k}_0, \dots, \mathit{k}_{r-1}) = (\mathit{k}_{\sigma^{-1}(0)}, \dots, \mathit{k}_{\sigma^{-1}(r-1)})$. We restrict $V^{(\mathit{k}_0, \dots, \mathit{k}_{r-1})}$ to $G(r,d,n)$. These restricted representations do not have to be irreducible in general, but we do know they contain an irreducible representation contained in $1_{S_n}^{G(r,d,n)}$. This irreducible space is generated by $w = \sum_{f \in M_n(\psi(\mathit{k}_0, \dots, \mathit{k}_{r-1}))} f$ over $\mathbb{C}[G(r,d,n)]$, an $\\ S_n$-invariant element in $V^{(\mathit{k}_0, \dots, \mathit{k}_{r-1})}$. By $W^{(\mathit{k}_0, \dots, \mathit{k}_{r-1})}$ we will denote this space generated by $w$. We notice that the spherical functions connected to $W^{(\mathit{k}_0, \dots, \mathit{k}_{r-1})}$ are the restrictions to $G(r,d,n)$ of the spherical functions connected to $V^{(\mathit{k}_0, \dots, \mathit{k}_{r-1})}$.\\

\begin{definition}
Let $\phi_1$ be the linear map:
$$\phi_1: V^{(\mathit{k}_0, \dots, \mathit{k}_{r-1})} \rightarrow V^{\gamma (\mathit{k}_0, \dots, \mathit{k}_{r-1})}$$
$$f(x_1, \dots, x_n) \mapsto x_1^p \dots x_n^p f(x_1, \dots, x_n) \mod R.$$
Here the mod $R$ means that this map is modulo the relationship $x_i^r = 1$ for all $i$. We write $\phi_a = \phi_1^a$.
\label{defmaps}
\end{definition}

\begin{proposition}
The map $\phi_1$ gives an isomorphism of $\mathbb{C}[G(r,d,n)]$-modules:
$$V^{(\mathit{k}_0, \dots, \mathit{k}_{r-1})} \cong V^{\gamma (\mathit{k}_0, \dots, \mathit{k}_{r-1})}.$$
Here $\gamma = (0 \ 1 \dots r-1)^p$.
\label{isom}
\end{proposition}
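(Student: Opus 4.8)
The plan is to verify three things in turn: that $\phi_1$ is a well-defined linear map landing in $V^{\gamma(\mathit{k}_0, \dots, \mathit{k}_{r-1})}$, that it intertwines the $G(r,d,n)$-action, and that it is bijective. The heart of the matter is the intertwining property; the other two points are essentially bookkeeping with the monomial bases. First I would record that both the $G(r,d,n)$-action and multiplication by $P := x_1^p \dots x_n^p$ are compatible with reduction modulo $R = (x_i^r - 1)$: substitution $x_i \mapsto \xi_{\sigma(i)}^{-1} x_{\sigma(i)}$ sends $x_i^r$ to $\xi_{\sigma(i)}^{-r} x_{\sigma(i)}^r = x_{\sigma(i)}^r$ since $\xi_{\sigma(i)}^r = 1$, so the action descends to the quotient ring and $\phi_1$ is genuinely defined there.

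The key observation, and the step where the specific structure of $G(r,d,n)$ enters, is that $P = x_1^p \dots x_n^p$ is $G(r,d,n)$-invariant. For $g = (\xi_1, \dots, \xi_n, \sigma)$ the substitution gives $g \cdot P = \prod_{i} (\xi_{\sigma(i)}^{-1} x_{\sigma(i)})^p = \big(\prod_i \xi_i\big)^{-p} \prod_j x_j^p$, and since $g \in G(r,d,n) = \mathrm{Ker}(\Phi_r^d)$ we have $(\xi_1 \dots \xi_n)^p = 1$, hence $g \cdot P = P$. Because the action is by a ring homomorphism of $\mathbb{C}[x_1, \dots, x_n]/R$, it distributes over products, so for any $f$ one gets $g \cdot \phi_1(f) = g \cdot (P f) = (g \cdot P)(g \cdot f) = P\,(g \cdot f) = \phi_1(g \cdot f)$. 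This is exactly the $\mathbb{C}[G(r,d,n)]$-linearity, and I expect this invariance of $P$ (together with checking it is compatible with the quotient) to be the main—indeed the only nontrivial—obstacle.

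For the target and for bijectivity I would argue on the monomial basis. A basis monomial of $V^{(\mathit{k}_0, \dots, \mathit{k}_{r-1})}$ has, in its exponent multiset, exactly $\mathit{k}_i$ occurrences of $i$; multiplying by $P$ and reducing mod $R$ raises every exponent by $p$ modulo $r$, so the resulting multiset has $\mathit{k}_i$ occurrences of $i + p \bmod r$. Since $\gamma = (0\,1\,\dots\,r-1)^p$ sends $i \mapsto i+p$, this is precisely the content of $\psi(\gamma(\mathit{k}_0, \dots, \mathit{k}_{r-1}))$, so $\phi_1$ carries the monomial basis of $V^{(\mathit{k}_0, \dots, \mathit{k}_{r-1})}$ bijectively onto that of $V^{\gamma(\mathit{k}_0, \dots, \mathit{k}_{r-1})}$; in particular the image is correct and $\phi_1$ is a linear isomorphism.

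Finally, a clean way to confirm invertibility without re-examining the basis is to note $pd = r$, so $\phi_1^d$ multiplies by $x_1^r \dots x_n^r \equiv 1 \bmod R$ and is the identity; thus $\phi_1$ is invertible with $\phi_1^{-1} = \phi_1^{d-1}$. Combining the module-homomorphism property with bijectivity yields the claimed isomorphism $V^{(\mathit{k}_0, \dots, \mathit{k}_{r-1})} \cong V^{\gamma(\mathit{k}_0, \dots, \mathit{k}_{r-1})}$ of $\mathbb{C}[G(r,d,n)]$-modules.
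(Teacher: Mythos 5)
Your proof is correct and follows essentially the same route as the paper: the intertwining rests on the same key fact that $(\xi_1 \cdots \xi_n)^p = 1$ for elements of $G(r,d,n)$ (the paper carries this out as one direct substitution computation, whereas you package it as invariance of $x_1^p \cdots x_n^p$ plus multiplicativity of the action), and bijectivity is established by the same explicit inverse $\phi_1^{d-1}$. Your extra monomial-basis bookkeeping confirming that the image lands in $V^{\gamma (k_0, \dots, k_{r-1})}$ is a detail the paper leaves implicit in Definition \ref{defmaps}, and is a welcome check.
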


\begin{proof}
First notice that the map $\phi_1$ is a bijection, because there is an inverse $\phi_1^{-1} = \phi_{d-1}$. We need to show that it is an intertwiner. Let $(\xi^{a_1}, \dots, \xi^{a_n}, \sigma) \in G(r,d,n)$, then $\\ p(a_1 + \dots + a_n) \equiv 0 \ \mathrm{mod} \ r$.
Now:
\begin{flalign*}
& (\xi^{a_1}, \dots, \xi^{a_n}, \sigma) \phi_1(f(x_1, \dots, x_n)) = (\xi^{a_1}, \dots, \xi^{a_n}, \sigma) (x_1^p \dots x_n^p f(x_1, \dots, x_n)) \myeq
\\ & (\xi^{-a_\sigma(1)}x_{\sigma(1)})^p \dots (\xi^{-a_\sigma(n)}x_{\sigma(n)})^p f(\xi^{-a_{\sigma(1)}}x_{\sigma(1)}, \dots, \xi^{-a_{\sigma(n)}}x_{\sigma(n)}) =
\\ & x_1^p \dots x_n^p f(\xi^{-a_{\sigma(1)}}x_{\sigma(1)}, \dots, \xi^{-a_{\sigma(n)}}x_{\sigma(n)}) = \phi_1((\xi^{a_1}, \dots, \xi^{a_n}, \sigma) f(x_1, \dots, x_n)).
\end{flalign*}
Notice that (1) still holds with the extra identity $x_i^r = 1$ because $(\xi^{a_i})^r =1$ holds. Thus the map $\phi$ is $\mathbb{C}[G(r,d,n)]$-linear and so the two modules are isomorphic as $\\ \mathbb{C}[G(r,d,n)]$-modules.
\end{proof}

\begin{corollary}
Two zonal spherical functions of $(G(r,1,n), S_n)$ restricted to $G(r,d,n)$ indexed by $(\mathit{k}_0, \dots, \mathit{k}_{r-1})$ and $(\mathit{k}'_0, \dots, \mathit{k}'_{r-1})$ are the same if there is $\beta \in \Gamma$ such that $\\ \beta (\mathit{k}_0, \dots, \mathit{k}_{r-1}) = (\mathit{k}'_0, \dots, \mathit{k}'_{r-1})$.
\end{corollary}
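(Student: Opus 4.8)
The plan is to connect the equality of restricted spherical functions directly to the isomorphism of $\mathbb{C}[G(r,d,n)]$-modules established in Proposition \ref{isom}. The key observation is that the zonal spherical function attached to an irreducible summand depends only on the isomorphism class of the corresponding $G(r,d,n)$-module together with its $H$-invariant vector. So the first step is to recall the paragraph preceding Definition \ref{defmaps}: the spherical function obtained by restricting the one attached to $V^{(\mathit{k}_0, \dots, \mathit{k}_{r-1})}$ is actually the spherical function attached to the irreducible submodule $W^{(\mathit{k}_0, \dots, \mathit{k}_{r-1})} \subseteq 1_{S_n}^{G(r,d,n)}$ generated by the $S_n$-invariant vector $w$.

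Next I would argue that $\phi_1$ carries the $S_n$-invariant generator of $W^{(\mathit{k}_0, \dots, \mathit{k}_{r-1})}$ to a scalar multiple of the $S_n$-invariant generator of $W^{\gamma(\mathit{k}_0, \dots, \mathit{k}_{r-1})}$. Concretely, the generator is $w = \sum_{f \in M_n(\psi(\mathit{k}_0, \dots, \mathit{k}_{r-1}))} f$, and applying $\phi_1$ (multiply by $x_1^p \cdots x_n^p$ modulo $x_i^r = 1$) sends each monomial in $M_n(\psi(\mathit{k}_0, \dots, \mathit{k}_{r-1}))$ to a monomial in $M_n(\psi(\gamma(\mathit{k}_0, \dots, \mathit{k}_{r-1})))$, so $\phi_1(w)$ is exactly the invariant generator of the target space. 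Since $\phi_1$ is a $\mathbb{C}[G(r,d,n)]$-isomorphism by Proposition \ref{isom} and it matches up the distinguished $H$-invariant vectors, it intertwines the two subrepresentations in a way that sends spherical vector to spherical vector.

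From here the conclusion follows formally: by Definition \ref{defspher} the spherical function is $\omega(g) = \langle v_1 \mid g v_1 \rangle$ where $v_1$ is the chosen $H$-invariant unit vector, and this value is preserved under any $G$-equivariant isometry matching the invariant vectors. Iterating over $\beta \in \Gamma = \langle \gamma \rangle$ and noting that $\Gamma$ acts on the index $r$-tuples through the powers $\gamma^a$ (with $\phi_a = \phi_1^a$), I would conclude that whenever $\beta(\mathit{k}_0, \dots, \mathit{k}_{r-1}) = (\mathit{k}'_0, \dots, \mathit{k}'_{r-1})$ for some $\beta \in \Gamma$, the two restricted spherical functions coincide on every double coset, hence are equal as elements of $C(S_n \backslash G(r,d,n) / S_n)$.

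The main obstacle I anticipate is the normalisation: $\phi_1$ as defined need not be an \emph{isometry}, only an isomorphism, so one must either check that $\phi_1$ preserves the relevant inner product (up to a positive scalar that cancels because both $w$ and $\phi_1(w)$ are unnormalised sums over monomial orbits of the same size), or else work directly with matrix coefficients and verify that $\overline{\rho_{11}}$ agrees for the two representations. The cleanest route is probably to observe that $M_n(\psi(\mathit{k}_0, \dots, \mathit{k}_{r-1}))$ and $M_n(\psi(\gamma(\mathit{k}_0, \dots, \mathit{k}_{r-1})))$ have the same cardinality (the multinomial coefficient is invariant under permuting the parts), so the invariant vectors have equal norm and the induced isometry identifies the spherical functions without any stray scalar.
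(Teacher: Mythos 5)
Your proposal is correct and follows exactly the paper's route: the paper's proof is the single line ``This follows from the isomorphism of Proposition \ref{isom}'', and your argument is simply that proof written out in detail, tracking the $S_n$-invariant generator $w$ through $\phi_1$ and noting that matrix coefficients against matched invariant unit vectors agree. Your worry about normalisation is also easily dispatched: $\phi_1$ maps the monomial basis $M_n(\psi(\mathit{k}_0,\dots,\mathit{k}_{r-1}))$ bijectively onto $M_n(\psi(\gamma(\mathit{k}_0,\dots,\mathit{k}_{r-1})))$, so it is an isometry for the monomial inner product (and in any case a Schur's lemma argument shows the scalar cancels), just as you concluded.
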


\begin{proof}
This follows from the isomorphism of Proposition \ref{isom}.
\end{proof}

In Corollary \ref{indexcor}, we will see that the restrictions of the zonal spherical functions are only the same when connected to $V^{\gamma^k (\mathit{k}_0, \dots, \mathit{k}_{r-1})}$ for any $k$. Now let $G = G(r,1,n)$, $K = G(r,d,n)$, $H = S_n$ and $p = \frac{r}{d}$. Now $G / K \cong C_d = \langle \xi^p \rangle$. Identify $G / K \cong C_d$ with the subgroup $\{ (\mu, \dots, \mu, id) \in G(r,1,n) \ | \ \mu \in C_d \}$. We notice that the group $H$ commutes with $\{ (\mu, \dots, \mu, id) \in G(r,1,n) \ | \ \mu \in C_d \}$. We will denote by $(G/K)H$ the product of the group $\{ (\mu, \dots, \mu, id) \in G(r,1,n) \ | \ \mu \in C_d \}$ with $H$, which is isomorphic to $G/K \times H$. By Proposition \ref{16} each double coset in $H \backslash G / H$ can be identified with an $r$-tuple $(\mathit{l}_0, \dots, \mathit{l}_{r-1})$ such that $\sum_{i=0}^{r-1} \mathit{l}_i = n$. We see that in the case of $H \backslash G / ((G/K)H)$ the double cosets can be identified with the orbits of $(\mathit{l}_0, \dots, \mathit{l}_{r-1})$ under the group $\Gamma = \langle (0\dots r-1)^p \rangle \subseteq S_r$. We write $X_n^{r-1} = \{(\mathit{k}_0, \dots, \mathit{k}_{r-1}) | \sum_{i=0}^{r-1} \mathit{k}_i = n\}$. If $|H \backslash K / H| = |H \backslash G / ((G / K)H)|$ then the spherical functions are exactly indexed by the orbits in  $X_n^{r-1} / \Gamma$. This is because on the orbits of $\Gamma \cong C_d$ the zonal spherical functions are the same, so there can be at most $|H \backslash G / ((G / K)H)|$ different restrictions and restriction is surjective, so there are $|H \backslash K / H|$ different restrictions (the amount of spherical functions).

\begin{lemma}
Let $r$ be an integer and $(l_1, \dots, l_n) \in \mathbb{N}^n$. The number of $n$-tuples \\ $(a_1, \dots, a_n) \in (\mathbb{Z}/r\mathbb{Z})^n$ satisfying $a_1 l_1 + \dots + a_n l_n \ \mathrm{mod} \ r = 0 \ \mathrm{mod} \ r$ is $\mathrm{gcd}(r, l_1, \dots, l_n) r^{n - 1}$
\label{K_lemma}
\end{lemma}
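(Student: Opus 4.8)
The plan is to recognise the count as the size of the kernel of a group homomorphism and then apply the first isomorphism theorem. I would define
$$\Psi: (\mathbb{Z}/r\mathbb{Z})^n \rightarrow \mathbb{Z}/r\mathbb{Z}, \qquad (a_1, \dots, a_n) \mapsto a_1 l_1 + \dots + a_n l_n \ \mathrm{mod} \ r,$$
where each $l_i$ is understood modulo $r$. This is a homomorphism of finite abelian groups, and the tuples we wish to count are precisely the elements of $\mathrm{Ker}(\Psi)$. Hence $|\mathrm{Ker}(\Psi)| = r^n / |\mathrm{Im}(\Psi)|$, and the entire problem reduces to determining the order of the image.

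The image $\mathrm{Im}(\Psi)$ is the subgroup of $\mathbb{Z}/r\mathbb{Z}$ generated by the residues of $l_1, \dots, l_n$, and I would show it equals the cyclic subgroup $\langle g \rangle$ where $g = \gcd(r, l_1, \dots, l_n)$. One inclusion is immediate: since $g$ divides each $l_i$, every $l_i$ lies in $\langle g \rangle$, so $\mathrm{Im}(\Psi) \subseteq \langle g \rangle$. For the reverse inclusion I would invoke Bézout's identity: there exist integers $c_0, c_1, \dots, c_n$ with $c_0 r + c_1 l_1 + \dots + c_n l_n = g$, so reducing modulo $r$ exhibits $g$ as $\Psi(c_1, \dots, c_n)$, whence $g \in \mathrm{Im}(\Psi)$ and therefore $\langle g \rangle \subseteq \mathrm{Im}(\Psi)$. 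As $\langle g \rangle$ has order $r/g$ in $\mathbb{Z}/r\mathbb{Z}$, this yields $|\mathrm{Im}(\Psi)| = r/g$.

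Combining the two steps gives $|\mathrm{Ker}(\Psi)| = r^n / (r/g) = g\, r^{n-1} = \gcd(r, l_1, \dots, l_n)\, r^{n-1}$, as required. The argument is essentially routine; the only point needing a little care is the identification of the image, and there the sole ingredient beyond the homomorphism theorem is Bézout's identity applied to the several integers $r, l_1, \dots, l_n$ simultaneously. An alternative would be induction on $n$, peeling off one coordinate at a time and tracking how the relevant gcd evolves, but the homomorphism argument is cleaner and avoids the bookkeeping, so that is the route I would take.
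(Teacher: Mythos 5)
Your proof is correct, and it takes a genuinely different route from the paper's. The paper proceeds by direct counting in two stages: first it assumes $r$ is a prime power $p^a$, where the key point is that $k = \mathrm{gcd}(r, l_1, \dots, l_n)$ is then realised as $\mathrm{gcd}(r, l_i)$ for a single index $i$ (true for prime powers, false for general $r$: take $r=6$, $l_1=2$, $l_2=3$); fixing the other $n-1$ coordinates arbitrarily, the congruence in the remaining variable has exactly $\mathrm{gcd}(r, l_i) = k$ solutions whenever it has at least one, giving $k r^{n-1}$. The general case is then assembled from the prime-power cases via the Chinese Remainder Theorem. You instead recognise the solution set as $\mathrm{Ker}(\Psi)$ for the homomorphism $\Psi(a_1, \dots, a_n) = \sum_i a_i l_i$, identify $\mathrm{Im}(\Psi)$ as the cyclic subgroup $\langle g \rangle$ of order $r/g$ with $g = \mathrm{gcd}(r, l_1, \dots, l_n)$ (divisibility gives one inclusion, the several-integer B\'ezout identity the other), and conclude from $|\mathrm{Ker}(\Psi)| = r^n / |\mathrm{Im}(\Psi)|$. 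Your argument is shorter and structurally cleaner: it needs no prime-power reduction and no CRT, precisely because the first isomorphism theorem plus B\'ezout replaces the paper's reliance on a single $l_i$ achieving the gcd, which is what forces the paper into its case analysis. What the paper's route buys is that it stays entirely at the level of elementary counting of solutions of $c x \equiv 0 \ \mathrm{mod} \ r$; but since the paper itself invokes the several-variable B\'ezout identity in the proposition that follows this lemma, your ingredients are no heavier than those already in use there, and your proof would serve as a drop-in simplification.
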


\begin{proof}
Set $k = \mathrm{gcd}(r, l_1, \dots, l_{n})$. First, let $r$ be a prime power, i.e. $r = p^a$. We know that $k = \mathrm{gcd}(r, l_1, \dots, l_{n}) =  p^b$ for some $0 \leq b \leq a$. We assume without loss of generality that $\mathrm{gcd}(r, l_1) = p^b$, which means that $\langle l_1\ \mathrm{mod} \ r \rangle = \langle p^b \ \mathrm{mod} \ r \rangle$ as subgroups of $\mathbb{Z}/r\mathbb{Z}$. We now choose $a_i$ for $i > 1$ freely in $r^{n-1}$ ways, and we wonder how many $a_1$ there can be such that:
\begin{flalign*}
& a_1 l_1 + \dots + a_{n} l_{n} \ \mathrm{mod} \ r = 0 \ \mathrm{mod} \ r
\\ & a_1 l_1 \ \mathrm{mod} \ r = (- a_2 l_2 - \dots - a_{n} l_{n}) \ \mathrm{mod} \ r.
\end{flalign*}
At least one such $a_1$ exists, so this questions is synonymous with asking how many $c$ there are such that $c l_1 \ \mathrm{mod} \ r = 0 \ \mathrm{mod} \ r$. This number is $\mathrm{gcd}(l_1,r) = p^b = k$. Hence in this case the amount of $n$-tuples $(a_1, \dots, a_n) \in (\mathbb{Z}/r\mathbb{Z})^n$ satisfying our conditions is $kr^{n - 1}$.

Now let $r$ be a general integer, then $r = p_1^{b_1} \dots p_m^{b_m}$ with the $p_i$ all distinct prime numbers. We notice that:
$$k = \mathrm{gcd}(r, l_1, \dots, l_{n}) = \prod_{i=1}^m \mathrm{gcd}(p^{b_i}, l_1, \dots, l_{n}).$$
We study the $(a_1, \dots, a_{n}) \in (\mathbb{Z}/r\mathbb{Z})^n$ such that $l_1 a _1 + \dots + l_{n} a_{n} \ \mathrm{mod} \ r = 0 \ \mathrm{mod} \ r$. By the Chinese Remainder Theorem we know that this coincides with the $\\ ((a_1^1, \dots, a_1^m), \dots, (a_{n}^1, \dots, a_{n}^m)) \in (\mathbb{Z} / p^{b_1} \mathbb{Z} \times \dots \times \mathbb{Z}/ p^{b_m} \mathbb{Z})^{n}$ such that (where in $a^i$ $i$ is now used as an index):
$$\forall_i \ l_1 a_1^i + \dots + l_{n} a_{n}^i \ \mathrm{mod} \ p^{b_i} = 0 \ \mathrm{mod} \ p^{b_i}.$$
By the earlier part of the proof, there are $\mathrm{gcd}(p^{b_i}, l_1, \dots, l_{n})(p^{b_i})^{n-1}$ such $(a_1^i, \dots, a_{n}^i)$. This means the amount of $n$-tuples $(a_1, \dots, a_{n}) \in (\mathbb{Z}/r\mathbb{Z})^n$ such that $l_1 a _1 + \dots + l_{n} a_{n} \ \mathrm{mod} \ r = 0 \ \mathrm{mod} \ r$ is $\prod_{i=1}^m (\mathrm{gcd}(p^{b_i}, l_1, \dots, l_{n})(p^{b_i})^{n-1}) = \mathrm{gcd}(r, l_1, \dots, l_{n}) r^{n - 1} = k  r^{n - 1}$.
\end{proof}

\begin{proposition}
$|H \backslash K / H| = |H \backslash G / ((G/K)H)|$.
\end{proposition}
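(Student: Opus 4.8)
The plan is to reduce the claimed equality to a purely combinatorial identity and then evaluate the two sides by independent counting methods that produce the same closed form term by term. By Proposition~\ref{16}, the left-hand side $|H\backslash K/H|$ equals the number of $r$-tuples $(l_0,\dots,l_{r-1}) \in X_n^{r-1}$ satisfying the extra congruence $\sum_{i=0}^{r-1} i\,l_i \equiv 0 \pmod d$. By the discussion preceding the statement, the right-hand side $|H\backslash G/((G/K)H)|$ equals $|X_n^{r-1}/\Gamma|$, the number of orbits of the cyclic group $\Gamma = \langle\gamma\rangle \cong C_d$ (shift by $p$) acting on $X_n^{r-1}$. So it suffices to show that the number of $\Gamma$-orbits on $X_n^{r-1}$ equals the number of tuples annihilated by the map $(l_0,\dots,l_{r-1}) \mapsto \sum_i i\,l_i \bmod d$.

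For the orbit count I would apply the orbit-counting theorem (Burnside's lemma): $|X_n^{r-1}/\Gamma| = \frac{1}{d}\sum_{j=0}^{d-1}|\mathrm{Fix}(\gamma^j)|$. The element $\gamma^j$ acts on the index set $\mathbb{Z}/r\mathbb{Z}$ as translation by $pj$; since $r = pd$, its cycles number $\gcd(r,pj) = p\gcd(d,j)$ and each has length $e_j := d/\gcd(d,j) = r/(p\gcd(d,j))$. A tuple is fixed exactly when it is constant on every cycle, so $\sum_i l_i$ is a multiple of $e_j$; hence $\mathrm{Fix}(\gamma^j)$ is empty unless $e_j \mid n$, in which case its size is the number of compositions of $n/e_j$ into $r/e_j$ nonnegative parts, namely $\binom{n/e_j + r/e_j - 1}{\,r/e_j - 1\,}$.

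For the constrained count I would use a roots-of-unity filter with $\zeta = \exp(2\pi i/d)$: the number of admissible tuples is $\frac{1}{d}\sum_{j=0}^{d-1}[x^n]\prod_{i=0}^{r-1}(1-x\zeta^{ji})^{-1}$, reading off $\sum_{\sum l_i = n}\zeta^{j\sum i l_i}$ from the generating function $\prod_i (1-xy^i)^{-1}$ at $y=\zeta^j$. The key simplification is that $\zeta^j$ has order $e_j = d/\gcd(d,j)$, and as $i$ runs over $0,\dots,r-1$ each $e_j$-th root of unity is hit exactly $r/e_j$ times (using $e_j \mid d \mid r$), so $\prod_{i=0}^{r-1}(1-x\zeta^{ji}) = (1-x^{e_j})^{r/e_j}$. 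Extracting the coefficient of $x^n$ from $(1-x^{e_j})^{-r/e_j}$ gives precisely $[e_j\mid n]\binom{n/e_j + r/e_j - 1}{\,r/e_j-1\,}$, which matches $|\mathrm{Fix}(\gamma^j)|$ for every $j$; summing and dividing by $d$ then yields the equality.

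The main obstacle I anticipate is bookkeeping rather than anything conceptual: one must verify that the single quantity $e_j = d/\gcd(d,j)$ simultaneously controls the cycle length of $\gamma^j$ on the orbit side and the order of $\zeta^j$ together with the exponent $r/e_j$ on the filter side, and that the divisibility condition $e_j \mid n$ appears identically in both. A tempting alternative, a direct bijection between orbits and constrained tuples, seems to fail because $\Gamma$ does not preserve the constraint: one computes $\sum_i i\,(\gamma\cdot l)_i \equiv \sum_i i\,l_i + pn \pmod d$, so the constrained set is generally not $\Gamma$-invariant, which is exactly why the averaging/filter argument is the natural route.
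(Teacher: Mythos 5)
Your proof is correct, and it takes a genuinely different route from the paper's. The paper never leaves the group-action framework: it applies Burnside's lemma to the action of $H=S_n$ on the two coset spaces $Y=K/H$ and $X=G/((G/K)H)$, and proves the termwise equality $|X^\sigma|=|Y^\sigma|=\gcd(d,l_1,\dots,l_{n_\sigma})\,p\,r^{n_\sigma-1}$ for every $\sigma\in S_n$ with cycle lengths $l_1,\dots,l_{n_\sigma}$; those fixed-point counts rest on a separate number-theoretic lemma (Lemma~\ref{K_lemma}), itself proven by prime-power reduction and the Chinese Remainder Theorem. You instead translate both sides into counts of $r$-tuples (the left side via Proposition~\ref{16}, the right side via the identification of $H\backslash G/((G/K)H)$ with $X_n^{r-1}/\Gamma$ asserted just before the proposition, which is legitimate to cite) and then average over the cyclic group $\Gamma\cong C_d$ rather than over $S_n$: Burnside for the orbit count, a roots-of-unity filter for the congruence-constrained count, matched term by term through the cyclotomic identity $\prod_{i=0}^{r-1}(1-x\zeta^{ji})=(1-x^{e_j})^{r/e_j}$. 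Your route buys several things: it avoids Lemma~\ref{K_lemma} entirely, the averaging group has order $d$ instead of $n!$, and it yields an explicit closed form for the common value, namely $\frac{1}{d}\sum_{j\,:\,e_j\mid n}\binom{n/e_j+r/e_j-1}{r/e_j-1}$ with $e_j=d/\gcd(d,j)$, i.e.\ a formula for the number of zonal spherical functions of $(G(r,d,n),S_n)$, which the paper's argument does not produce. The paper's route, in exchange, proves the finer fact that the two $H$-sets have equal fixed-point counts permutation by permutation, without needing the tuple dictionary for the double cosets. Your closing observation---that no naive bijection can work because the constraint $\sum_i i\,l_i\equiv 0 \pmod d$ shifts by $pn$ under $\Gamma$, so the constrained set is not $\Gamma$-invariant---is accurate and correctly explains why some averaging argument is unavoidable in either approach.
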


\begin{proof}
First we notice that each left coset of $K/H$ can be represented by a $(\xi^{a_1}, \dots, \xi^{a_n})$ such that $a_1 + \dots + a_n \ \mathrm{mod} \ d = 0 \ \mathrm{mod} \ d$ and each left coset of $G / ((G/K)H)$ can be represented by a $(\xi^j,\xi_2, \dots, \xi_n)$ where $\xi_i \in C_r$ and $0 \leq j \leq p-1$. Each such element uniquely determines a coset. We notice that $H$ acts from the left on $X = G / ((G/K)H)$ and $Y = K/H$ and that the orbits under these actions can be identified with the double cosets of $H \backslash K / H$ and $H \backslash G / ((G/K)H)$. Hence by Burnside's Lemma:
$$|H \backslash G / ((G/K)H)| = \frac{1}{|H|} \sum_{\sigma \in H} |X^{\sigma}|$$
$$|H \backslash K / H| = \frac{1}{|H|} \sum_{\sigma \in H} |Y^{\sigma}|.$$
Remember $H = S_n$. We will prove $\forall_{\sigma \in S_n} \ |X^{\sigma}| = |Y^{\sigma}|$. If $\sigma \in S_n$, then $\sigma$ can be written as a product of disjoint cycles, i.e. $\sigma = \tau_1 \dots \tau_{n_{\sigma}}$. Let $l_i$ be the length of the cycle $\tau_i$ and $n_{\sigma}$ the number of cycles. We write $k = \mathrm{gcd}(d, l_1, \dots, l_{n_{\sigma}})$.

\textit{Claim 1:} $|Y^{\sigma}| = kpr^{n_{\sigma}-1}$.

We know that for every $y \in Y$ there is a representative $(\xi^{a_1}, \dots, \xi^{a_n})$ such that $\\ a_1 + \dots + a_n \ \mathrm{mod} \ d = 0 \ \mathrm{mod} \ d$. For $(\xi^{a_1}, \dots, \xi^{a_n}) \in Y^{\sigma}$ to hold we know that $a_i$ must be constant for each $i$ in the same cycle $\tau_j$. So we need to count the $n_{\sigma}$ - tuples $\\ (a_1, \dots, a_{n_\sigma}) \in (\mathbb{Z}/ r \mathbb{Z})^{n_{\sigma}}$ such that $a_1l_1 + \dots + a_{n_{\sigma}} l_{n_{\sigma}} \ \mathrm{mod} \ d = 0 \ \mathrm{mod} \ d$. By Lemma \ref{K_lemma}  there are $k d^{n_{\sigma} - 1}$ such integers where the $a_i \in \mathbb{Z} / d \mathbb{Z}$. However, the original $a_i$ can be in $\mathbb{Z} / r \mathbb{Z}$. This means that the amount of $n_{\sigma}$-tuples $(a_1, \dots, a_{n_\sigma}) \in (\mathbb{Z}/ r \mathbb{Z})^{n_{\sigma}}$ such that $a_1l_1 + \dots + a_{n_{\sigma}} l_{n_{\sigma}} \ \mathrm{mod} \ d = 0 \ \mathrm{mod} \ d$ is $k d^{n_{\sigma} - 1}p^{n_{\sigma}} = \mathrm{gcd}(d, l_1, \dots, l_{n_{\sigma}}) pr^{n_{\sigma} - 1}$. Hence $|Y^{\sigma}| = kpr^{n_{\sigma}-1}$.

\textit{Claim 2:} $|X^{\sigma}| = kpr^{n_{\sigma}-1}$.

 If $x \in X$, choose a representative $(\xi^j, \xi_2, \dots, \xi_n)$, where $0 \leq j \leq p-1$. Now let $\sigma$ act on $(\xi^j, \xi_2, \dots, \xi_n)$ and let $\xi_1 = \xi^j$. Then $\sigma (\xi_1, \xi_2, \dots, \xi_n) = (\xi_{\sigma^{-1}(1)}, \dots, \xi_{\sigma^{-1}(n)})$ where $(\xi_{\sigma^{-1}(1)}, \dots, \xi_{\sigma^{-1}(n)})$ represents the class $(\xi_{\sigma^{-1}(1)}, \dots, \xi_{\sigma^{-1}(n)}, id)(G/K)H$. We know that $\sigma = \tau_1 \dots \tau_{n_{\sigma}}$ where the $\tau_i$ are disjoint cycles of length $l_i$. We know that $\sigma$ acts trivially if and only if there is $\mu \in C_d$ such that: $\forall_i \ \xi_i = \mu \xi_{\sigma^{-1}(i)}$. We obtain $\mu^{l_i} = 1$ for every $l_i$. On the other hand, if we have such a $\mu$, it can give $p r^{n_{\sigma} - 1}$ distinct $(\xi^j, \xi_2, \dots, \xi_n) \in X^{\sigma}$ (decide $\xi_i$ for exactly one number in each cycle $\tau_j$). We obtain:
$$|X^{\sigma}| = |\{ \mu \in C_d \ | \ \forall_i \ \mu^{l_i} = 1 \}|pr^{n_{\sigma} - 1}.$$
So we need to count the number of $\mu$ satisfying our conditions. Recall that $\mu = \xi^{ap}$ for an $a \in \mathbb{Z}/d \mathbb{Z}$. Now we need to count the $a \in \mathbb{Z}/d\mathbb{Z}$ such that $\forall_i  \ a \cdot l_i \ \mathrm{mod} \ d = 0 \ \mathrm{mod} \ d$. We know that by the Euclidean algorithm there are integers $m, x_i$ such that:
\begin{flalign*}
& x_1 l_1 + \dots + x_{n_{\sigma}} l_{n_{\sigma}} \ \mathrm{mod} \ d= k \ \mathrm{mod} \ d.
\end{flalign*}
If we multiply by an $a$ satisfying our conditions we get:
$$0 \ \mathrm{mod} \ d= a (x_1 l_1 + \dots + x_{n_{\sigma}} l_{n_{\sigma}}) \ \mathrm{mod} \ d = ak \ \mathrm{mod} \ d.$$
Hence the number of $a$ satisfying our conditions is $|\langle \frac{d}{k} \rangle| = k$. So $|X^{\sigma}| = kpr^{n_{\sigma} - 1} = |Y^{\sigma}|$.
\end{proof}

\begin{corollary}
The zonal spherical functions of $G(r,d,n)$ are exactly indexed by the orbits of $X_n^{r-1}$ under the group $\Gamma$. \qed
\label{indexcor}
\end{corollary}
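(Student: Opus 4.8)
The plan is to run a pigeonhole argument: exhibit a surjection from the set of $\Gamma$-orbits $X_n^{r-1}/\Gamma$ onto the set of zonal spherical functions of $(K,H) = (G(r,d,n), S_n)$, and then show that source and target have the same cardinality, forcing the surjection to be a bijection and hence the indexing to be exact.

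First I would set up the surjection and the upper bound. By Lemma \ref{restriction}, every zonal spherical function of $(K,H)$ is the restriction to $K = G(r,d,n)$ of a zonal spherical function $\omega^{(k_0,\dots,k_{r-1})}$ of $(G,H) = (G(r,1,n), S_n)$, and the latter are indexed by $X_n^{r-1}$; this gives a surjective map from $X_n^{r-1}$ onto the spherical functions of $(K,H)$. By the Corollary following Proposition \ref{isom}, the isomorphism $\phi_1$ identifies $V^{(k_0,\dots,k_{r-1})}$ with $V^{\gamma(k_0,\dots,k_{r-1})}$ as $\mathbb{C}[G(r,d,n)]$-modules, so the restricted spherical function depends only on the $\Gamma$-orbit of $(k_0,\dots,k_{r-1})$. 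Hence the surjection factors through $X_n^{r-1}/\Gamma$, and there are at most $|X_n^{r-1}/\Gamma|$ distinct restricted spherical functions.

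Next I would match the counts. The number of zonal spherical functions of $(K,H)$ equals $|H \backslash K / H|$ by the multiplicity-free decomposition of $1_H^K$, and by the preceding Proposition this equals $|H \backslash G / ((G/K)H)|$. Finally I would identify this last quantity with $|X_n^{r-1}/\Gamma|$: by Proposition \ref{16} in its $G(r,1,n)$ form (where the congruence constraint is vacuous) the double cosets in $H \backslash G / H$ are indexed by the tuples $(l_0,\dots,l_{r-1})$ with $\sum_i l_i = n$, i.e.\ by $X_n^{r-1}$, and enlarging the right subgroup by the commuting action of $G/K \cong \Gamma$ multiplies every entry by a power of $\xi^p$, which cyclically shifts the content tuple by $p$ — precisely the action of $\gamma = (0\,1\dots r-1)^p$. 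Thus the extra translations collapse exactly the $\Gamma$-orbits, giving $|H \backslash G / ((G/K)H)| = |X_n^{r-1}/\Gamma|$.

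Combining these, the surjection $X_n^{r-1}/\Gamma \to \{\text{spherical functions of }(K,H)\}$ is a surjection between finite sets of equal cardinality, hence a bijection; so distinct $\Gamma$-orbits yield distinct spherical functions and the indexing is exact. The genuinely substantial step is the cardinality equality $|H \backslash K / H| = |H \backslash G / ((G/K)H)|$, which is the content of the preceding Proposition and rests on the orbit-counting Lemma \ref{K_lemma}; once that is in hand, the corollary is a formal pigeonhole argument. The only point requiring care is the identification of $H \backslash G / ((G/K)H)$ with $X_n^{r-1}/\Gamma$, where one must check that the left $H$-orbits together with the $(G/K)$-translation correspond precisely to the $\Gamma$-action on content tuples, as sketched above.
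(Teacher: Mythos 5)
Your proposal is correct and takes essentially the same route as the paper: the paper likewise combines surjectivity of restriction (Lemma \ref{restriction}), the $\Gamma$-invariance of the restricted functions coming from Proposition \ref{isom}, and the cardinality chain $|H\backslash K/H| = |H\backslash G/((G/K)H)| = |X_n^{r-1}/\Gamma|$ to conclude by exactly this pigeonhole argument. Your identification of $H\backslash G/((G/K)H)$ with $X_n^{r-1}/\Gamma$ via cyclic shifts of content tuples by $p$ is also the paper's (implicit) reasoning.
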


Now we know the spherical functions are indexed by the orbits of $X_n^{r-1}$ under the group $\Gamma$, we will also be wanting to know about the dimensions of the irreducible representations $W^{(\mathit{k}_0, \dots, \mathit{k}_{r-1})}$ belonging to the zonal spherical functions so we can deduce the values of the inner product $\frac{1}{|G|} \sum_{k=1}^s |D_k| \omega_i(D_k)\overline{\omega_j(D_k)} = \delta_{ij} \mathrm{dim} (V_i)^{-1}$ from Corollary \ref{13}. Denote the stabilizer of an element $x \in X_n^{r-1}$ by $\Gamma_x$ and the orbit of $x$ by $\Gamma x$. We know that if $S \subseteq X_n^{r-1}$ is a set of representatives of the orbits of $X_n^{r-1}$ by $\Gamma$ then $1_{S_n}^{G(r,d,n)} = \bigoplus_{s \in S} W^s$.

\begin{proposition}
$\dim(W^{(\mathit{k}_0, \dots, \mathit{k}_{r-1})}) = \frac{\dim(V^{(\mathit{k}_0, \dots, \mathit{k}_{r-1})})}{|\Gamma_{(\mathit{k}_0, \dots, \mathit{k}_{r-1})}|} = |\Gamma_{(\mathit{k}_0, \dots, \mathit{k}_{r-1})}|^{-1} \binom{n}{\mathit{k}_0, \dots, \mathit{k}_{r-1}}$.
\end{proposition}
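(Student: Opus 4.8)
The plan is to compute $\dim\bigl(W^{(\mathit{k}_0,\dots,\mathit{k}_{r-1})}\bigr)$ by inducing this module back up to $G=G(r,1,n)$ and matching the result against the multiplicity-free decomposition $1_{S_n}^{G}=\bigoplus_{s'}V^{s'}$ of Proposition \ref{18}. The second equality is immediate: by Proposition \ref{18} the module $V^{(\mathit{k}_0,\dots,\mathit{k}_{r-1})}$ has for a basis the distinct monomials in $M_n(\psi(\mathit{k}_0,\dots,\mathit{k}_{r-1}))$, of which there are exactly $\binom{n}{\mathit{k}_0,\dots,\mathit{k}_{r-1}}$, so $\dim V^{(\mathit{k}_0,\dots,\mathit{k}_{r-1})}=\binom{n}{\mathit{k}_0,\dots,\mathit{k}_{r-1}}$. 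It remains to prove the first equality. Write $\mathbf{k}=(\mathit{k}_0,\dots,\mathit{k}_{r-1})$; all three quantities in the statement depend only on the $\Gamma$-orbit of $\mathbf{k}$ (for $\dim W^{\mathbf{k}}$ this uses that the $\phi_a$ of Proposition \ref{isom} are $\mathbb{C}[K]$-isomorphisms carrying $W^{\mathbf k}$ to $W^{\gamma^a\mathbf k}$), so we may assume $\mathbf{k}\in S$, a fixed set of $\Gamma$-orbit representatives of $X_n^{r-1}$ (Corollary \ref{indexcor}).

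Set $K=G(r,d,n)$ and $H=S_n$. Transitivity of induction together with Proposition \ref{18} gives $\bigoplus_{s'}V^{s'}=1_H^G=\mathrm{Ind}_K^G(1_H^K)=\bigoplus_{s\in S}\mathrm{Ind}_K^G(W^s)$, since $1_H^K=\bigoplus_{s\in S}W^s$. Because the left-hand side is multiplicity-free and each $\mathrm{Ind}_K^G(W^s)$ is a direct summand of $1_H^G$, every irreducible $V^{s'}$ occurs in exactly one summand $\mathrm{Ind}_K^G(W^s)$ and with multiplicity one. This partitions the index set $X_n^{r-1}$ into blocks $B_s:=\{\,s' : V^{s'}\subseteq\mathrm{Ind}_K^G(W^s)\,\}$, one per $s\in S$. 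The goal is to identify $B_{\mathbf k}$ with the orbit $\Gamma\mathbf k$.

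First I would show $\Gamma\mathbf k\subseteq B_{\mathbf k}$. By Frobenius reciprocity $V^{s'}$ lies in $\mathrm{Ind}_K^G(W^{\mathbf k})$ iff $W^{\mathbf k}$ occurs in $\mathrm{Res}_K V^{s'}$, so it suffices to exhibit a copy of $W^{\mathbf k}$ inside $\mathrm{Res}_K V^{s'}$ for each $s'\in\Gamma\mathbf k$. For $s'=\gamma^a\mathbf k$ the map $\phi_a$ of Proposition \ref{isom} is a $\mathbb{C}[K]$-isomorphism $V^{\mathbf k}\cong V^{s'}$, whence $\mathrm{Res}_K V^{s'}\cong\mathrm{Res}_K V^{\mathbf k}\supseteq W^{\mathbf k}$; thus $s'\in B_{\mathbf k}$. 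Moreover, since $\binom{n}{\mathit{k}_0,\dots,\mathit{k}_{r-1}}$ is symmetric in its entries and $\gamma$ merely permutes them, all $V^{s'}$ with $s'\in\Gamma\mathbf k$ share the common dimension $\dim V^{\mathbf k}$.

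Finally I would upgrade this inclusion to an equality. The blocks $\{B_s\}_{s\in S}$ and the orbits $\{\Gamma s\}_{s\in S}$ are two partitions of the \emph{same} finite set $X_n^{r-1}$ indexed by the \emph{same} set $S$, with $\Gamma s\subseteq B_s$ for every $s$; equality of the total cardinalities then forces $B_s=\Gamma s$ termwise. Hence $\mathrm{Ind}_K^G(W^{\mathbf k})=\bigoplus_{s'\in\Gamma\mathbf k}V^{s'}$, and taking dimensions with $[G:K]=d$ and the orbit–stabiliser count $|\Gamma\mathbf k|=d/|\Gamma_{\mathbf k}|$ gives $d\,\dim W^{\mathbf k}=|\Gamma\mathbf k|\,\dim V^{\mathbf k}=(d/|\Gamma_{\mathbf k}|)\dim V^{\mathbf k}$, i.e. $\dim W^{\mathbf k}=\dim V^{\mathbf k}/|\Gamma_{\mathbf k}|$. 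The only genuinely delicate point is ruling out any \emph{extra} $V^{s'}$ in $\mathrm{Ind}_K^G(W^{\mathbf k})$; I sidestep a direct Clifford-theoretic analysis of the restriction by observing that the blocks $B_s$ already form a partition, so the one-sided inclusion $\Gamma\mathbf k\subseteq B_{\mathbf k}$ combined with the cardinality count closes the gap.
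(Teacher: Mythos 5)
Your proof is correct, and it takes a genuinely different route from the paper's. The paper never leaves the restriction side: since $G(r,d,n)$ is normal in $G(r,1,n)$, the translates $gW^{(\mathit{k}_0,\dots,\mathit{k}_{r-1})}$, $g\in G(r,1,n)$, are irreducible $\mathbb{C}[G(r,d,n)]$-submodules of $V^{(\mathit{k}_0,\dots,\mathit{k}_{r-1})}$, and the endomorphism $\phi_a$ attached to a generator $\gamma^{a}$ of the stabiliser acts on each translate $gW^{(\mathit{k}_0,\dots,\mathit{k}_{r-1})}$ by the scalar $(\xi_1\cdots\xi_n)^{ap}$; these scalars take $|\Gamma_{(\mathit{k}_0,\dots,\mathit{k}_{r-1})}|$ distinct values, which forces the local inequality $|\Gamma_{(\mathit{k}_0,\dots,\mathit{k}_{r-1})}|\dim W^{(\mathit{k}_0,\dots,\mathit{k}_{r-1})}\le\dim V^{(\mathit{k}_0,\dots,\mathit{k}_{r-1})}$. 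You instead induce upward, using transitivity of induction, Frobenius reciprocity and the multiplicity-freeness of $1_{S_n}^{G(r,1,n)}$ to obtain the block partition $\{B_s\}$, with the inclusion $\Gamma\mathbf{k}\subseteq B_{\mathbf{k}}$ coming from Proposition \ref{isom}. Both arguments share the same skeleton, namely a one-sided local estimate closed by a global count against the decomposition $1_{S_n}^{G(r,d,n)}=\bigoplus_{s\in S}W^{s}$ (the paper sums dimensions, you compare cardinalities of blocks and orbits), but the local ingredients differ: the paper's is a Clifford-theoretic eigenspace analysis, yours is Frobenius reciprocity. The trade-off is this: your route leans crucially on the Gelfand property of the \emph{larger} pair (Propositions \ref{17} and \ref{18}), but rewards you with the stronger structural statement $\mathrm{Ind}_{G(r,d,n)}^{G(r,1,n)}W^{\mathbf{k}}\cong\bigoplus_{s'\in\Gamma\mathbf{k}}V^{s'}$, equivalently that $W^{\mathbf{k}}$ occurs in the restriction of $V^{s'}$ exactly when $s'\in\Gamma\mathbf{k}$, which incidentally re-derives Corollary \ref{indexcor}; the paper's route needs only the dimension count from Proposition \ref{18} and yields, as by-products, the divisibility $\dim W^{(\mathit{k}_0,\dots,\mathit{k}_{r-1})}\mid\dim V^{(\mathit{k}_0,\dots,\mathit{k}_{r-1})}$ and the explicit eigenspace decomposition of $V^{(\mathit{k}_0,\dots,\mathit{k}_{r-1})}$ under $\phi_a$. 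One detail you leave implicit: to get $\phi_a(W^{\mathbf{k}})=W^{\gamma^{a}\mathbf{k}}$ (which you need for the orbit-invariance of $\dim W^{\mathbf{k}}$, and hence for the reduction to $\mathbf{k}\in S$) you should check that $\phi_a$ maps the generating $S_n$-invariant vector of $V^{\mathbf{k}}$ to that of $V^{\gamma^{a}\mathbf{k}}$; this is a one-line computation from Definition \ref{defmaps}, so it is a loose end rather than a gap.
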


\begin{proof}
Remember that for an arbitrary finite group acting on a finite space $X$ the equality $|G| = |G_x| \cdot |Gx|$ holds. We will prove that $\dim(W^{(\mathit{k}_0, \dots, \mathit{k}_{r-1})}) \leq \frac{\dim(V^{(\mathit{k}_0, \dots, \mathit{k}_{r-1})})}{|\Gamma_{(\mathit{k}_0, \dots, \mathit{k}_{r-1})}|}$. Once we know that, then:
\begin{flalign*}
& \frac{r^n}{d} = \dim(1_{S_n}^{G(r,d,n)}) = \sum_{s \in S} \dim(W^s) \leq \sum_{s \in S} \frac{\dim(V^{s})}{|\Gamma_{s}|} = \sum_{x \in X_n^{r-1}} \frac{\dim(V^x)}{|\Gamma_x| |\Gamma x|} =
\\ & \frac{1}{|\Gamma|} \sum_{x \in X_n^{r-1}} \dim(V^x) = \frac{1}{d} r^n = \frac{r^n}{d}.
\end{flalign*}
The above inequality is an equality and we would obtain that:
$$\dim(W^{(\mathit{k}_0, \dots, \mathit{k}_{r-1})}) = \\ \frac{\dim(V^{(\mathit{k}_0, \dots, \mathit{k}_{r-1})})}{|\Gamma_{(\mathit{k}_0, \dots, \mathit{k}_{r-1})}|}.$$

If $g \in G(r,1,n)$, then $g W^{(\mathit{k}_0, \dots, \mathit{k}_{r-1})}$ is a $\mathbb{C}[G(r,d,n)]$ - submodule because $G(r,d,n)$ is normal. By the irreducibility of $W^{(\mathit{k}_0, \dots, \mathit{k}_{r-1})}$ we obtain that each $gW^{(\mathit{k}_0, \dots, \mathit{k}_{r-1})}$ is irreducible and either $g_1 W^{(\mathit{k}_0, \dots, \mathit{k}_{r-1})} \cap g_2 W^{(\mathit{k}_0, \dots, \mathit{k}_{r-1})} = \{0\}$ or $g_1 W^{(\mathit{k}_0, \dots, \mathit{k}_{r-1})} = g_2 W^{(\mathit{k}_0, \dots, \mathit{k}_{r-1})}$. Hence $V^{(\mathit{k}_0, \dots, \mathit{k}_{r-1})}$ is a direct sum of some of these spaces and $\dim(W^{(\mathit{k}_0, \dots, \mathit{k}_{r-1})}) | \dim(V^{(\mathit{k}_0, \dots, \mathit{k}_{r-1})})$.

We know that $\Gamma_{(\mathit{k}_0, \dots, \mathit{k}_{r-1})} \subseteq \Gamma$ is a subgroup and because $\Gamma$ is cyclic we know that $\Gamma_{(\mathit{k}_0, \dots, \mathit{k}_{r-1})} = \langle (0 \dots r-1)^{ap} \rangle$ for some $a | d$ and $|\Gamma_{(\mathit{k}_0, \dots, \mathit{k}_{r-1})}| = \frac{d}{a}$. If $\gamma \in \Gamma_{(\mathit{k}_0, \dots, \mathit{k}_{r-1})}$ the function $\phi_a$ from Definition \ref{defmaps} is a $\mathbb{C}[G(r,d,n)]$-endomorphism of $V^{(\mathit{k}_0, \dots, \mathit{k}_{r-1})}$. We study what this function does on the spaces $gW^{(\mathit{k}_0, \dots, \mathit{k}_{r-1})}$. First notice that if $w = \sum_{f \in M_n(\psi(\mathit{k}_0, \dots, \mathit{k}_{r-1}))} f$ then $\phi_a(w) = w$. Now let $g = (\xi_1, \dots, \xi_n, \sigma) \in G(r,1,n)$. If $v \in gW^{(\mathit{k}_0, \dots, \mathit{k}_{r-1})}$, i.e. $v = g c w$, where $c \in \mathbb{C}[G(r,d,n)]$, we see that:
\begin{flalign*}
& \phi(gcw) =  x_1^{ap} \dots x_n^{ap} (gcw) = (\xi_1 \dots \xi_n)^{ap} (g (x_1 \dots x_n)^{ap}) (gcw)
\\ & (\xi_1 \dots \xi_n)^{ap} g \phi_a(cw) = (\xi_1 \dots \xi_n)^{ap} g c \phi_a(w) = (\xi_1 \dots \xi_n)^{ap} gcw.
\end{flalign*}

Hence the $g W^{(\mathit{k}_0, \dots, \mathit{k}_{r-1})}$ are eigenspaces of $\phi$ with eigenvalues $(\xi_1 \dots \xi_n)^{ap}$. We see there can be $\frac{d}{a}$ different eigenvalues and thus at least $\frac{d}{a} = |\Gamma_{(\mathit{k}_0, \dots, \mathit{k}_{r-1})}|$ different eigenspaces. We know that these eigenspaces must be disjoint and each of these eigenspaces contains $gW^{(\mathit{k}_0, \dots, \mathit{k}_{r-1})}$ for some $g$. This means $\frac{d}{a}\dim(W^{(\mathit{k}_0, \dots, \mathit{k}_{r-1})}) \leq \dim(V^{(\mathit{k}_0, \dots, \mathit{k}_{r-1})})$ and thus:
$$\dim(W^{(\mathit{k}_0, \dots, \mathit{k}_{r-1})}) \leq \frac{\dim(V^{(\mathit{k}_0, \dots, \mathit{k}_{r-1})})}{\Gamma_{(\mathit{k}_0, \dots, \mathit{k}_{r-1})}}.$$
\end{proof}

Let $Y_{n,d}^{r-1} = \{(\mathit{l}_1, \dots, \mathit{l}_{r-1}) | \sum \mathit{l}_i \leq n, \sum i \mathit{l}_i = 0 \ \mathrm{mod} \ d\}$ where $d | r$. We get some orthogonality relations for the hypergeometric functions.

\begin{corollary}
If $(\mathit{k}_0, \dots, \mathit{k}_{r-1}), (\mathit{k}_0', \dots, \mathit{k}_{r-1}') \in X_n^{r-1}$ and we set $\mathit{l}_0 = n - \mathit{l}_1 - \dots - \mathit{l}_{r-1}$:
\begin{flalign*}
& \frac{d}{r^n} \sum_{\mathit{l} \in Y_{n,d}^{r-1}} \binom{n}{\mathit{l}_0, \dots, \mathit{l}_{r-1}}F(-\mathit{l}, -\mathit{k}, -n, \widetilde{\Xi}_{r})\overline{F(-\mathit{l}, -\mathit{k}', -n, \widetilde{\Xi}_{r})} =
\\ & 1_{\Gamma (\mathit{k}_0, \dots, \mathit{k}_{r-1})} ((\mathit{k}_0', \dots, \mathit{k}_{r-1}')) |\Gamma_{(\mathit{k}_0, \dots, \mathit{k}_{r-1})}| \binom{n}{\mathit{k}_0, \dots, \mathit{k}_{r-1}}^{-1}.
\end{flalign*}
Here $1_X$ is the indicator function.
\end{corollary}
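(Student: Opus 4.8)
The plan is to specialise the general orthogonality relation of Corollary \ref{13} to the Gelfand pair $(G(r,d,n), S_n)$ and then translate each abstract quantity into the explicit combinatorial data appearing in the statement. By Theorem \ref{19}, Lemma \ref{restriction} and Corollary \ref{indexcor}, the zonal spherical functions of $(G(r,d,n), S_n)$ are exactly the restrictions of the functions $\omega^{(\mathit{k}_0, \dots, \mathit{k}_{r-1})}$ of $(G(r,1,n), S_n)$, indexed by the orbits of $X_n^{r-1}$ under $\Gamma$, and on the double coset labelled by $\mathit{l}$ they take the value $F(-\mathit{l}, -\mathit{k}, -n, \widetilde{\Xi}_r)$. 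By Proposition \ref{16} the double cosets are indexed precisely by $\mathit{l} \in Y_{n,d}^{r-1}$ (setting $\mathit{l}_0 = n - \mathit{l}_1 - \dots - \mathit{l}_{r-1}$), so these are the terms over which the sum in Corollary \ref{13} runs.

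First I would compute the size of each double coset $D_{\mathit{l}}$. Using Proposition \ref{16}(1), every left coset of $S_n$ is represented by a tuple $(\xi_1, \dots, \xi_n, id)$, and $D_{\mathit{l}}$ consists of all left cosets obtained by permuting the entries of its representative. Since the representative has $\mathit{l}_i$ entries equal to $\xi^i$, there are exactly $\binom{n}{\mathit{l}_0, \dots, \mathit{l}_{r-1}}$ distinct such left cosets, each of cardinality $|S_n| = n!$, so $|D_{\mathit{l}}| = \binom{n}{\mathit{l}_0, \dots, \mathit{l}_{r-1}} n!$. As a consistency check, summing over $\mathit{l} \in Y_{n,d}^{r-1}$ recovers $|G(r,d,n)| = \frac{r^n}{d} n!$.

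Next I would substitute these data into Corollary \ref{13} with $i = (\mathit{k}_0, \dots, \mathit{k}_{r-1})$ and $j = (\mathit{k}_0', \dots, \mathit{k}_{r-1}')$. The factor $|D_{\mathit{l}}|$ contributes $\binom{n}{\mathit{l}_0, \dots, \mathit{l}_{r-1}} n!$, while the prefactor $|G(r,d,n)|^{-1}$ contributes $\frac{d}{r^n n!}$; the $n!$ cancels and leaves the stated constant $\frac{d}{r^n}$. On the right-hand side, the Kronecker delta $\delta_{ij}$ is nonzero exactly when the two restricted spherical functions coincide, which by Corollary \ref{indexcor} occurs if and only if $(\mathit{k}_0', \dots, \mathit{k}_{r-1}')$ lies in the orbit $\Gamma(\mathit{k}_0, \dots, \mathit{k}_{r-1})$; this is precisely the indicator $1_{\Gamma(\mathit{k}_0, \dots, \mathit{k}_{r-1})}$. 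Finally, the factor $\dim(V_i)^{-1}$ is supplied by the dimension formula of the preceding Proposition, namely $\dim(W^{(\mathit{k}_0, \dots, \mathit{k}_{r-1})})^{-1} = |\Gamma_{(\mathit{k}_0, \dots, \mathit{k}_{r-1})}| \binom{n}{\mathit{k}_0, \dots, \mathit{k}_{r-1}}^{-1}$. Assembling these substitutions reproduces the claimed identity.

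Since every ingredient has already been established, no step presents a genuine obstacle; the only points demanding care are the bookkeeping of the double coset cardinality and the correct identification of $\delta_{ij}$ with the $\Gamma$-orbit indicator, both of which rest squarely on Proposition \ref{16} and Corollary \ref{indexcor}.
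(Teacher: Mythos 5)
Your proof is correct and follows exactly the route the paper takes: its proof is the one-line observation that the identity is Corollary \ref{13} applied to the spherical functions of $(G(r,d,n), S_n)$, and your write-up simply makes explicit the same ingredients (double coset sizes, the indexing by $\Gamma$-orbits from Corollary \ref{indexcor}, and the dimension formula for $W^{(\mathit{k}_0, \dots, \mathit{k}_{r-1})}$). All of your substitutions, including the cancellation of $n!$ and the identification of $\delta_{ij}$ with the orbit indicator, check out.
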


\begin{proof}
This follows from the orthogonality relations of spherical functions in Corollary \ref{13} applied to the spherical functions of the Gelfand pair $(G(r,d,n), S_n)$.
\end{proof}

We will now give some examples of Gelfand pairs of complex reflection groups for illustration and to look at some notable groups.

\vspace{.5 cm}

\textbf{Example of the dihedral group}

We will now look at the special case $G(r,r,2)$, the dihedral group $D_r$. This group is generated by the elements $a$ and $b$ with the relations $a^r = b^2 = (ab)^2 = e$ the identity. We will show that the spherical functions coincide with the known spherical functions of $D_r$ in the literature. Note that $G(r,r,2) = \{ (\xi^{i}, \xi^{-i}, (12)^j) \ | \ i, j \in \mathbb{Z}\}$. This coincides with the dihedral group if we set $a \sim (\xi, \xi^{-1}, id)$ and $b \sim (1, 1, (12))$. Let $D_1 = \langle b \rangle = \{e,b\} \cong S_2$. We know $(D_r, D_1)$ or $(G(p,p,2), S_2)$ is a Gelfand pair.

\begin{proposition}[{\cite[Theorem 4.3]{akamizu}}]
Each double coset can be represented by a $k$ with $0 \leq k \leq \lfloor \frac{r}{2} \rfloor$, the double cosets being represented by $D_1 a^k D_1 = \{a^k, a^k b, a^{-k}, a^{-k}b \}$. The spherical functions of $(D_r, D_1)$ are indexed by $0 \leq m \leq \lfloor \frac{r}{2} \rfloor$ and are given by:
$$\omega_m(a^{k}) = \cos(\frac{2 \pi k m}{r}).$$ \qed
\end{proposition}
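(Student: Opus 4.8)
The plan is to prove the two assertions separately: first the enumeration of double cosets, then the identification of the spherical functions. For the double cosets I would work directly in $D_r$ using the relation $ba = a^{-1}b$. Expanding $D_1 a^k D_1 = \{e,b\}a^k\{e,b\}$ and rewriting $b a^k = a^{-k}b$ and $b a^k b = a^{-k}$ gives $D_1 a^k D_1 = \{a^k, a^k b, a^{-k}, a^{-k}b\}$, exactly as claimed. Since $a^k$ and $a^{-k} = a^{r-k}$ lie in the same double coset, the pairs $\{k, r-k\}$ collapse and a complete, irredundant set of representatives is $k = 0, 1, \dots, \lfloor r/2\rfloor$. I would confirm completeness by checking that the cardinalities of these cosets (namely $2$ for $k=0$ and, when $r$ is even, for $k = r/2$; otherwise $4$) sum to $|D_r| = 2r$. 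This yields exactly $\lfloor r/2\rfloor + 1$ double cosets.

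For the spherical functions my main approach is to verify directly, via the characterisation in Theorem \ref{15}, that the candidate functions $\omega_m(a^k) = \cos(2\pi km/r)$ are zonal spherical, rather than unwinding the general hypergeometric formula. First I would observe that setting $\omega_m(a^k) = \omega_m(a^k b) = \cos(2\pi km/r)$ defines a bi-$D_1$-invariant function, since $\cos$ is even and hence constant on each double coset $\{a^{\pm k}, a^{\pm k}b\}$, and that $\omega_m(e) = 1 \neq 0$. Using bi-invariance it suffices to test the functional equation \eqref{eq:sumformula} on pairs $g = a^j$, $k = a^l$, where $|D_1| = 2$ gives
$$\tfrac{1}{2}\bigl(\omega_m(a^{j+l}) + \omega_m(a^j b a^l)\bigr) = \tfrac{1}{2}\bigl(\cos\tfrac{2\pi(j+l)m}{r} + \cos\tfrac{2\pi(j-l)m}{r}\bigr),$$
using $a^j b a^l = a^{j-l}b$; the product-to-sum identity collapses the right-hand side to $\cos\frac{2\pi jm}{r}\cos\frac{2\pi lm}{r} = \omega_m(a^j)\omega_m(a^l)$, so each $\omega_m$ is zonal spherical by Theorem \ref{15}.

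It then remains to see that $m = 0, 1, \dots, \lfloor r/2\rfloor$ produces exactly the full list. For distinctness I would evaluate at $a$: the values $\cos(2\pi m/r)$ with $2\pi m/r \in [0,\pi]$ are pairwise different because $\cos$ is injective on $[0,\pi]$, so the $\omega_m$ are distinct spherical functions. Since the number of zonal spherical functions of a Gelfand pair equals the number of double cosets $|D_1 \backslash D_r / D_1| = \lfloor r/2\rfloor + 1$, these $\lfloor r/2\rfloor + 1$ functions must be all of them.

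The computation above is entirely routine, so I do not expect a serious obstacle along this route; the only points requiring care are the reduction of the functional equation to powers of $a$ and the bookkeeping of the even and odd cases in the coset count. The one genuinely delicate alternative would be to instead derive the result from Mizukawa's expression in Theorem \ref{19} together with the restriction picture of Corollary \ref{indexcor}: there one must match the $\Gamma$-orbit in $X_2^{r-1}$ indexed by $m$ (the ``distance'' between the two marked positions, with $m=0$ the degenerate orbit) to the corresponding double coset and then simplify the hypergeometric sum over the $(r-1)\times(r-1)$ matrix $\widetilde{\Xi}_r$, truncated at $\sum a_{ij}\le 2$, down to a single cosine, which is where the real work would lie. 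I would keep the direct verification as the primary argument and, if desired, record the hypergeometric identification only as a consistency check.
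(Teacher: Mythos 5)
Your proof is correct, but it takes a genuinely different route from the paper — indeed, the paper does not prove this proposition at all: it is imported from Akazawa--Mizukawa \cite[Theorem 4.3]{akamizu} (hence the tombstone in the statement), and the text that follows only performs what you call the ``delicate alternative'' as a consistency check. Concretely, the paper invokes Corollary \ref{indexcor}: the spherical functions of $(G(r,r,2),S_2)$ are the restrictions of the hypergeometric spherical functions of $(G(r,1,2),S_2)$ from Theorem \ref{19}, indexed by the $\Gamma$-orbits in $X_2^{r-1}$ with representatives $e_0+e_i$, $0\le i\le \lfloor r/2\rfloor$, and the hypergeometric sum collapses immediately because the parameter vectors $-\mathit{l}$, $-\mathit{k}$ have entries in $\{0,-1\}$, so every term with $\sum a_{st}\ge 2$ carries a factor $(-1)_2=0$; what survives is $1-\tfrac12(1-\xi^{ik})-\tfrac12(1-\xi^{-ik})=\cos(2\pi ik/r)$, matching the cited formula. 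Your primary argument instead is a self-contained elementary proof: the double-coset enumeration from $ba=a^{-1}b$ with the even/odd cardinality bookkeeping, direct verification of equation \eqref{eq:sumformula} via the product-to-sum identity (your reduction to $g=a^j$, $k=a^l$ by bi-invariance is legitimate, since both sides of the functional equation are unchanged when $g$ and $k$ are replaced within their double cosets), and the completeness argument from injectivity of $\cos$ on $[0,\pi]$ together with $s=|D_1\backslash D_r/D_1|=\lfloor r/2\rfloor+1$. The only implicit ingredient you should cite is Proposition \ref{17} (so that Theorem \ref{15} applies, as it presupposes a Gelfand pair). What each approach buys: the paper's route is not really a proof of the proposition but a cross-validation of its general $(n+1,m+1)$-hypergeometric machinery against a known classical result, which is its narrative purpose at that point; your route gives an independent proof that relies neither on the citation nor on Theorem \ref{19}, at the cost of being special to the dihedral case.
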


The spherical functions of $(G(p,p,2),S_2)$ we found are indexed by the orbits of $\\ X_2^{r-1} = \{(\mathit{k}_0, \dots, \mathit{k}_{r-1}) | \sum_{i=0}^{r-1} \mathit{k}_i = 2\}$ under the group $\Gamma = \langle (0 \dots r-1) \rangle \subseteq S_r$. A full set of representatives is $\{e_0 + e_i\}_{0 \leq i \leq \lfloor \frac{r}{2} \rfloor}$ (here $e_i = (\delta_{ki})_{0 \leq k \leq r-1}$). We then see that the double cosets corresponding to $D_1 a^k D_1$ ($0 \leq k \leq \lfloor \frac{r}{2} \rfloor$) are indexed by $(e_{k} + e_{- k \mod r})$. We find that on the orbit represented by $e_0 + e_i$, where $0 \leq i \leq \lfloor \frac{r}{2} \rfloor$:
$$\omega^{e_0 + e_i}_{e_k + e_{- k \mod r}} =1 - \frac{1}{2}(1 - \xi^{ik}) - \frac{1}{2}(1 - \xi^{-ik}) = \cos(\frac{2 \pi ik}{r}).$$
Hence the spherical functions here and those resulting from \cite{akamizu} are the same, as they should be.

\vspace{.5 cm}

\textbf{Example of $G(2,2,n)$}

The group $G(2,2,n) \subseteq G(2,1,n)$ is a subgroup of the hyperoctahedral group $G(2,1,n) = H_n$ and is the Weyl group of type $D_n$ \cite{humphreys}. We will further study the spherical functions of this group. We know that:
$$G(2,2,n) = \{((-1)^{a_1}, \dots, (-1)^{a_n}, \sigma) \in G(2,1,n) \ | \ a_1 + \dots + a_n \mod 2 \equiv 0 \}.$$
By Proposition \ref{16} the double cosets in $S_n \backslash G(2,2,n) / S_n$ can be represented by a $2$-tuple $(\mathit{l}_0, \mathit{l}_1)$ such that $\mathit{l}_0 + \mathit{l}_1 = n$ and $\mathit{l}_1 \mod 2 \equiv 0$, i.e. $\mathit{l}_1$ is even. This means each double coset is uniquely decided by an even number $\mathit{l}_1$ such that $0 \leq \mathit{l}_1 \leq n$.

\begin{proposition}
The spherical functions of $G(2,2,n)$ are Gauss hypergeometric functions indexed by the $0 \leq \mathit{k} \leq \lfloor \frac{n}{2} \rfloor$, where the value on the double coset indexed by $\mathit{l}_1$ on the spherical function indexed by $\mathit{k}$ is:
$$\omega_{(\mathit{l}_0, \mathit{l}_1)}^{n-\mathit{k}, \mathit{k}} = {}_2 F_1 (-\mathit{l}_1, -\mathit{k}, -n, 2).$$
\end{proposition}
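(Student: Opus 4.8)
The plan is to specialise Mizukawa's formula (Theorem \ref{19}) to $r = 2$ and then transport it to $G(2,2,n)$ by means of the restriction lemma (Lemma \ref{restriction}) together with the indexing supplied by Corollary \ref{indexcor}.

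First I would write down the spherical functions of the ambient pair $(G(2,1,n), S_n)$. When $r = 2$ we have $\xi = \exp(\pi i) = -1$, so the matrix $\widetilde{\Xi}_2 = (1 - \xi^{ij})_{1 \le i,j \le 1}$ degenerates to the single scalar $1 - \xi^{1\cdot 1} = 1 - (-1) = 2$. Moreover both the $\alpha$- and $\beta$-tuples in Theorem \ref{19} have length $r-1 = 1$, so the $(n+1,m+1)$-hypergeometric function of Definition \ref{defhgf} is a $(2,4)$-hypergeometric function. Its index set $M_{1,1}(\mathbb{N})$ consists of a single nonnegative integer $a$, and the sum collapses to $\sum_a \frac{(-l_1)_a (-k_1)_a}{(-n)_a} \frac{2^a}{a!}$, which is exactly the Gauss series ${}_2F_1(-l_1, -k_1, -n; 2)$. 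Hence $\omega^{(k_0,k_1)}_{(l_0,l_1)} = {}_2F_1(-l_1, -k_1, -n; 2)$ for the pair $(G(2,1,n), S_n)$.

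Next I would identify the parameters on the $G(2,2,n)$ side. Since $p = r/d = 1$, the permutation is $\gamma = (0\,1)$ and $\Gamma = \langle (0\,1) \rangle$ acts on $X_n^1 = \{(k_0,k_1) : k_0 + k_1 = n\}$ by interchanging the two entries. The orbits are therefore the pairs $\{(n-k,k), (k,n-k)\}$, and a full set of representatives is $(n-k,k)$ for $0 \le k \le \lfloor n/2 \rfloor$. By Corollary \ref{indexcor} these orbits index the spherical functions of $(G(2,2,n), S_n)$, which accounts for the stated range of $k$. Finally I would read off the values: by Lemma \ref{restriction} the spherical function attached to the orbit of $(n-k,k)$ is the restriction of $\omega^{(n-k,k)}$ to $G(2,2,n)$, so its value on any double coset equals the value of the $G(2,1,n)$ spherical function on the same group element. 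By Proposition \ref{16} and the parity computation preceding the statement, the double cosets of $S_n$ in $G(2,2,n)$ are represented by $(\underbrace{1,\dots,1}_{l_0}, \underbrace{-1,\dots,-1}_{l_1}, \mathrm{id})$ with $l_1$ even; such a representative also lies in the $(l_0,l_1)$-double coset of $S_n$ in $G(2,1,n)$. Substituting $(k_0,k_1) = (n-k,k)$ into the formula of the first step then yields $\omega^{n-k,k}_{(l_0,l_1)} = {}_2F_1(-l_1, -k, -n; 2)$, as claimed.

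The only real obstacle is careful bookkeeping. One must verify that the $(2,4)$-hypergeometric function genuinely reduces to the one-variable Gauss series, so that the degenerate $1 \times 1$ matrix $\widetilde{\Xi}_2$ contributes precisely the scalar $2$ as the argument; and one must confirm that the double-coset representatives of $G(2,2,n)$ are literally the same group elements as the even-$l_1$ representatives of $G(2,1,n)$, so that restriction leaves the evaluated values unchanged. Both are routine once the specialisation $r = d = 2$, $p = 1$ is fixed.
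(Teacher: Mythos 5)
Your proposal is correct and follows essentially the same route as the paper: index the spherical functions by the orbits of $X_n^1$ under $\Gamma = \langle (0\,1) \rangle$ via Corollary \ref{indexcor}, pick representatives $(n-k,k)$ with $0 \leq k \leq \lfloor n/2 \rfloor$, and read off the values by restricting the $(G(2,1,n),S_n)$ spherical functions of Theorem \ref{19}, which for $r=2$ collapse to ${}_2F_1(-l_1,-k,-n;2)$. The paper's own proof records only the orbit-counting step and leaves the specialisation of Theorem \ref{19} and the role of Lemma \ref{restriction} implicit, so your write-up is simply a more detailed account of the same argument.
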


\begin{proof}
The spherical functions are indexed by the orbits of the $2$-tuples $(\mathit{k}_0, \mathit{k}_1)$ under the group $\Gamma = \langle (12) \rangle = S_2$. We take the representative of each orbit such that $\mathit{k}_0 \geq \mathit{k}_1$, which means $0 \leq \mathit{k}_1 \leq \lfloor \frac{n}{2} \rfloor$. Each such $\mathit{k}_1$ gives a unique orbit.
\end{proof}

\section{Product formula}

Recall the product formula of Theorem \ref{15}. In this section we will apply it to explicitly find a product formula for hypergeometric functions. We consider the Gelfand pair $(G(r,1,n),S_n)$.

\begin{theorem}
For the $(n+1,m+1)$-hypergeometric functions we have a product formula:
\begin{flalign*}
& F(-\mathit{l}, -\mathit{k}, -n, \widetilde{\Xi}_{r}) \cdot F(-\mathit{l}', -\mathit{k}, -n, \widetilde{\Xi}_{r}) =
\\ & \binom{n}{\mathit{l}_0', \dots, \mathit{l}_{r-1}'}^{-1} \cdot \sum_{A \in M_{r-1 \times r-1}(\mathbb{N})}(\prod_{i=0}^{r-1}\binom{\mathit{l}_i}{a_{i0}, \dots, a_{i(r-1)}}) \cdot
\\ & F((-\sum_{i=0}^{r-1}a_{i((-i + 1) \mathrm{mod} r)}, \dots, -\sum_{i=0}^{r-1}a_{i((r-1-i) \mathrm{mod} r)}),-\mathit{k}, -n, \widetilde{\Xi}_{r})
\end{flalign*}
where $\mathit{l}$, $\mathit{l}'$ and $\mathit{k}$ are $(r-1)$-tuples in $\mathbb{N}^{r-1}$, $n \in \mathbb{N}$, $\sum_{i=1}^{n}\mathit{l}_i \leq n, \ \sum_{i=1}^{n}\mathit{l}_i' \leq n$, $\sum_{i=1}^{n}\mathit{k}_i \leq n$, $\widetilde{\Xi}_{r} = (1-\xi^{ij})_{1 \leq i,j \leq r-1}$ and $\xi = \exp((2\pi i)/r)$, $\mathit{l}_0 = n - \sum_{i=1}^{n}\mathit{l}_i$, $\mathit{l}_0' = n - \sum_{i=1}^{n}\mathit{l}_i'$, $\\ a_{0i} = l_{i}' - \sum_{j=1}^{r-1}a_{ji}$ $(i > 0)$, $a_{i0} = l_{i} - \sum_{j=1}^{r-1}a_{ij}$ $(i > 0)$, $a_{00} = n + \sum_{i,j \geq 1}a_{ij} - \sum_{i=1}^{n}\mathit{l}_i - \sum_{i=1}^{n}\mathit{l}_i'$.
\end{theorem}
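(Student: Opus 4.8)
The plan is to apply the functional equation of Theorem \ref{15} to a single zonal spherical function $\omega = \omega^{(k_0,\dots,k_{r-1})}$ of the Gelfand pair $(G(r,1,n),S_n)$, evaluated at two group elements lying in the double cosets indexed by $l$ and $l'$. By Theorem \ref{19} the two sides of that equation are exactly the hypergeometric expressions in the claim, so the whole content of the theorem is the combinatorial evaluation of the averaged sum $\frac{1}{|S_n|}\sum_{h\in S_n}\omega(ghg')$, where $g$ represents the double coset of $l$ and $g'$ that of $l'$.

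First I would fix explicit double coset representatives using Proposition \ref{16}: take $g=(\vec\zeta,\mathrm{id})$ whose phase vector $\vec\zeta$ contains $l_a$ copies of $\xi^a$, and $g'=(\vec\eta,\mathrm{id})$ whose phase vector contains $l'_b$ copies of $\xi^b$, so that $\omega(g)=F(-l,-k,-n,\widetilde{\Xi}_{r})$ and $\omega(g')=F(-l',-k,-n,\widetilde{\Xi}_{r})$, producing the left-hand side. Writing $h\in S_n$ as $(\vec 1,h)$ and using the semidirect product multiplication of $G(r,1,n)$, a direct computation gives $ghg'=\bigl((\zeta_i\,\eta_{h^{-1}(i)})_{i},\,h\bigr)$. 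Since (as in Proposition \ref{16}) a double coset is determined only by the multiset of phases, the double coset of $ghg'$ is indexed by the tuple $l''$ with $l''_c=\sum_{a+b\equiv c\ (\mathrm{mod}\ r)}a_{ab}$, where $a_{ab}$ counts the positions $i$ in the $\xi^a$-block of $\vec\zeta$ with $h^{-1}(i)$ in the $\xi^b$-block of $\vec\eta$. This matrix $(a_{ab})$ is a contingency table with row sums $\sum_b a_{ab}=l_a$ and column sums $\sum_a a_{ab}=l'_b$.

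The main step is to count, for a fixed admissible table $(a_{ab})$, the number of $h\in S_n$ realizing it. Partitioning each $\xi^a$-block of $\vec\zeta$ according to the target $\xi^b$-blocks and then matching bijectively yields the count $\prod_{a=0}^{r-1}\binom{l_a}{a_{a0},\dots,a_{a(r-1)}}\prod_{b=0}^{r-1}l'_b!$. Substituting the hypergeometric values from Theorem \ref{19}, dividing by $|S_n|=n!$, and using $\prod_b l'_b!/n!=\binom{n}{l'_0,\dots,l'_{r-1}}^{-1}$ turns the averaged sum into
\[
\binom{n}{l'_0,\dots,l'_{r-1}}^{-1}\sum_{(a_{ab})}\prod_{a}\binom{l_a}{a_{a0},\dots,a_{a(r-1)}}\,F(-l'',-k,-n,\widetilde{\Xi}_{r}).
\]
Finally I would reparametrize the sum: the $(r-1)\times(r-1)$ block $(a_{ij})_{i,j\ge 1}$ ranges freely over $M_{r-1\times r-1}(\mathbb{N})$, while the boundary entries $a_{0i},a_{i0},a_{00}$ are forced by the row- and column-sum constraints exactly as displayed, and the convention that multinomials with negative entries vanish automatically discards tables violating nonnegativity.

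The delicate points are the permutation count in the third step and the modular bookkeeping $l''_c=\sum_i a_{i,(c-i)\bmod r}$, which must be matched against the indexing $-\sum_{i=0}^{r-1}a_{i((c-i)\bmod r)}$ in the hypergeometric argument; once the contingency-table interpretation of $(a_{ab})$ is in place these are routine but must be tracked carefully, together with the verification that the forced boundary values coincide with the displayed formulas for $a_{0i}$, $a_{i0}$ and $a_{00}$.
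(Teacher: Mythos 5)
Your proposal is correct and follows essentially the same route as the paper: both apply the functional equation of Theorem \ref{15} with double coset representatives from Proposition \ref{16}, parametrize the products $ghg'$ by contingency tables with row sums $l_a$ and column sums $l'_b$, count the permutations realizing each table, and reparametrize so the free parameters are the $(r-1)\times(r-1)$ block. The only cosmetic difference is that the paper first quotients the sum by the Young subgroup $S_{l'_0}\times\dots\times S_{l'_{r-1}}$ and counts cosets per table, whereas you count all $h\in S_n$ per table (picking up the factor $\prod_b l'_b!$) and divide by $n!$ at the end; these are the same computation.
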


\begin{proof}
Recall Theorem \ref{15}. By Theorem \ref{19}, for $g = (g_1, \dots, g_n, \sigma )$ and $|\{j \ | \  g_j = \xi ^ i\}| = \mathit{l}_i$ the zonal spherical function associated to $(\mathit{k}_0, \dots, \mathit{k}_n)$ evaluated in $g$ is:
$$F((-\mathit{l}_1, ... ,-\mathit{l}_{r-1}), (-\mathit{k}_1, ... ,-\mathit{k}_{r-1}), -n, \widetilde{\Xi}_{r}).$$
Here $F$ is an $(n+1,m+1)$-hypergeometric function. We set $\\ x = (\underbrace{1, \dots, 1}_{\mathit{l}_0}, \underbrace{\xi, \dots, \xi}_{\mathit{l}_1}, \dots, \underbrace{\xi ^{r-1}, \dots, \xi ^{r-1}}_{\mathit{l}_{r-1}}, id)$ and $\\ y = (\underbrace{1, \dots, 1}_{\mathit{l}'_0}, \underbrace{\xi, \dots, \xi}_{\mathit{l}'_1}, \dots, \underbrace{\xi ^{r-1}, \dots, \xi ^{r-1}}_{\mathit{l}'_{r-1}}, id)$. Let $\omega$ is the spherical function indexed by $(\mathit{k}_0, \dots, \mathit{k}_{r-1})$. We know by equation \eqref{eq:sumformula} that $\omega(x)\omega(y) = \frac{1}{n!} \sum_{\sigma \in S_n} \omega(x \sigma y)$, which is equal to the left-hand side of the formula in the theorem. We study $x \sigma y$:

$$x \sigma y = (x_1 y_{\sigma ^{-1} (1)}, \dots, x_n y_{\sigma ^{-1} (n)}, \sigma) \sim (x_1 y_{\sigma ^{-1} (1)}, \dots, x_n y_{\sigma ^{-1} (n)}, 1).$$

This means by equation \eqref{eq:sumformula}: 

$$\omega (x) \omega (y) = \frac{1}{n!} \sum_{\sigma \in S_n} \omega (x_1 y_{\sigma ^{-1} (1)}, \dots, x_n y_{\sigma ^{-1} (n)}, 1) = \frac{1}{n!} \sum_{\sigma \in S_n} \omega (x_1 y_{\sigma (1)}, \dots, x_n y_{\sigma (n)}, id).$$

We have $\sigma y = \tau y$ if and only if $\tau \sigma^{-1} \in S_{\mathit{l}_0'} \times \dots \times S_{\mathit{l}_{r-1}'}$. This means:
\begin{equation}
\omega (x) \omega (y) = \frac{\mathit{l}_0'! \dots \mathit{l}_{r-1}'!}{n!} \sum_{\sigma \in S_n /  (S_{\mathit{l}_0'} \times \dots \times S_{\mathit{l}_{r-1}'})} \omega (x_1 y_{\sigma (1)}, \dots, x_n y_{\sigma (n)}, id).
\label{eq:2}
\end{equation}
Now we create a matrix $A \in M_{r \times r}(\mathbb{N})$, $A = (A_{ij})_{0 \leq i,j \leq r-1}$ for a $\sigma \in S_n /  (S_{\mathit{l}_0'} \times \dots \times S_{\mathit{l}_{r-1}'})$. It is created like this:
$$a_{ij} = | \{ k | x_k = \xi ^ i \} \cap \{ k | y_{\sigma (k)} = \xi ^ j \}|.$$
Then $\sum_{j = 0} a_{ij} = \mathit{l}_i, \sum_{j=0} a_{ji} = \mathit{l}_i'$. If $A$ is a matrix with $\sum_{j = 0} a_{ij} = \mathit{l}_i$ and $\sum_{j=0} a_{ji} = \mathit{l}_i'$ then it can be seen that there is a $\sigma$ that gives rise to this matrix. If some $\sigma$ gives rise to such a matrix, and we associate an $r$-tuple $(\mathit{l}_0^{\sigma}, \dots, \mathit{l}_{r-1}^{\sigma})$ to $(x_1 y_{\sigma (1)}, \dots, x_n y_{\sigma (n)}, 1)$ in the usual way we find that:
$$\mathit{l}_k^{\sigma} = \sum_{\{(i,j)|i+j = k \ mod \ r\}} a_{ij} = \sum_{i=0}^{r-1} a_{i((k-i)mod) \ r}.$$
This means $A$ uniquely decides the double coset.
We see there are $\prod_{i=0}^{r-1} \binom{\mathit{l}_i}{a_{i0}, \dots, a_{i(r-1)}}$ different $\sigma \in S_n /  (S_{\mathit{l}_0'} \times \dots \times S_{\mathit{l}_{r-1}'})$ such that they give rise to the matrix $A$. Now define:
$$\mathcal{A} = \{ A \in M_{r \times r}(\mathbb{N}) \ | \ \sum_{i=0}^{r-1} a_{ji} = \mathit{l}_j, \ \sum_{i=0}^{r-1} a_{ij} = \mathit{l}_j' \}.$$
By \eqref{eq:2} we obtain: 
\begin{flalign*}
& \omega (x) \omega (y) = \binom{n}{\mathit{l}_0', \dots, \mathit{l}_{r-1}'}^{-1} \sum_{A \in \mathcal{A}} (\prod_{i=0}^{r-1} \binom{\mathit{l}_i}{a_{i0}, \dots, a_{i(r-1)}}) \cdot
\\ & F((- \sum a_{i((1-i)mod r)}, \dots, - \sum a_{i((r-1-i)modr)}), -\mathit{k}, -n, \widetilde{\Xi}_r).
\end{flalign*}
Now we relabel the $A \in \mathcal{A}$. If $A \in M_{(r-1) \times (r-1)} (\mathbb{N})$ (where $A=(A_{ij})_{1 \leq i,j \leq r-1}$). Define:
$\\a_{0i} = \mathit{l}_i' - \sum_{j=1}^{r-1}a_{ji} \ (i > 0)$, $a_{i0} = \mathit{l}_i - \sum_{j=1}^{r-1}a_{ij} \ (i > 0)$, $a_{00} = n + \sum_{i,j \geq 1} a_{ij} - \sum_{i \geq 1} \mathit{l}_i' + \mathit{l}_i$.
\end{proof}

\begin{remark}
The spherical functions for $(G(r,d,n), S_n)$ are the same hypergeometric functions as for $(G(r,1,n), S_n)$. This means the product formula gives roughly the same equalities, with only the second argument possibly differing in the hypergeometric function.
\end{remark}

We will prove that the identity, in the case of $r=2$, coincides with a special case of an identity stated by Dunkl \cite{dunkladd}. We use the formulation by Rahman \cite{rahman79}:
\begin{flalign*}
& K_{n}(x; p, N) K_{n}(y; p, N) = \sum_{s=0}^x \binom{x}{s}\frac{(y-N)_s(-y)_{x-s}}{(-N)_x} \sum_{r=0}^{x-s}\binom{x-s}{r} \cdot
\\ & (\frac{2p-1}{p})^r(\frac{1-p}{p})^{x-s-r}K_n(2s+y-x+r;p,N).
\end{flalign*}
Here $K_n(x;p,N) = {}_2F_1(-x,-n;-N, \frac{1}{p})$ the Krawtchouk polynomial. In our case $p = \frac{1}{2}$, so the formula degenerates to:
$$K_{n}(x; \frac{1}{2}, N) K_{n}(y; \frac{1}{2}, N) = \sum_{s = 0}^{x} \binom{x}{s} \frac{(y-N)_{s}(-y)_{x-s}}{(-N)_{x}} K_n(2s +y -x; \frac{1}{2}, N).$$
Our product formula gives us (take $x \leq y$):
\begin{flalign*}
& K_n(x; \frac{1}{2}, N) K_n(y; \frac{1}{2}, N) = K_n(y; \frac{1}{2}, N) K_n(x; \frac{1}{2}, N)
\\ & = \binom{N}{x}^{-1} \sum_{i \geq 0} \binom{N-y}{x-i} \binom{y}{i} K_n(x+y-2i; \frac{1}{2}, N).
\end{flalign*}
Now $\max{\{ 0, x+y-N \}} \leq i \leq x$. Define $s = x-i$, this implies $0 \leq s \leq \min{\{ x, N-y \}}$. Then the sum above equals:
\begin{flalign*}
& = \sum_{s = 0}^{x} \binom{N}{x}^{-1} \binom{N-y}{s} \binom{y}{x-s} K_n(y + 2s - x; \frac{1}{2}, N).
\end{flalign*}
Because $\binom{N}{x}^{-1} \binom{N-y}{s} \binom{y}{x-s} = \binom{x}{s} \frac{(y-N)_s (-y)_{x-s}}{(-N)_x}$ we see that the identities coincide.

\section{Laplace operator}
In this section we will find an analog of the Laplace operator for a finite space of which the spherical functions are eigenfunctions. The motivation for finding such an operator is that for Gelfand pairs of Lie groups there exist such operators and for a certain category of discrete Gelfand pairs $(G,H)$ discussed in \cite[Chapter 5]{cecsil08}.

\begin{definition}
Let the Hamming distance on $G(r,d,n)/S_n$ be: $d(xS_n,yS_n) = \\ |\{i \ | \ x_i \neq y_i\}|$.
\end{definition}

We will simply write $d(x,y) = d(xS_n,yS_n)$ for representatives $x$ and $y$. We remark that the Hamming distance makes $G(r,d,n)/S_n$ a $G(r,d,n)$-invariant metric space.

\begin{definition}
If $(X, d)$ is a finite metric space the operator $\Delta_k$ is defined as:
$$\Delta_k: C(X) \rightarrow C(X)$$
$$f \mapsto (x \mapsto \sum_{\{y \ | \ d(x,y) = k\}} f(y)).$$
We call $\Delta_1$ the Laplace operator, and it acts as summing over the nearest neighbours.
\end{definition}

\begin{theorem}
If $(G(r,d,n)/S_n, d)$ has the structure of a $G(r,d,n)$-invariant metric space, the zonal spherical functions $\omega_i$ are eigenfunctions of the differential operators $\Delta_k$. The eigenvalues are $\sum_{\{y|d(e,y)=k\}}\omega_i(y)$.
\end{theorem}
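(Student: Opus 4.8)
The plan is to realise each $\omega_i$ as a function on $X = G(r,d,n)/S_n$ and to express $\Delta_k$ as convolution by a suitable bi-$S_n$-invariant function, so that the diagonalisation reduces to the commutativity of the Hecke algebra $C(S_n \backslash G(r,d,n)/S_n)$. Write $G = G(r,d,n)$, $H = S_n$, $X = G/H$, with base point $o = eH$ whose stabiliser under the left $G$-action is exactly $H$. Every spherical function $\omega_i$ is bi-$H$-invariant, hence in particular right-$H$-invariant, so it descends to an element of $C(X)$ and $\Delta_k\omega_i \in C(X)$ makes sense. Let $\delta_k \in C(G)$ be the indicator of $\{g \in G \mid d(o, g\cdot o) = k\}$. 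Since $d$ is $G$-invariant and $H = \mathrm{Stab}(o)$, one checks $d(o, h_1 g h_2\cdot o) = d(o, g\cdot o)$ for $h_1,h_2 \in H$, so $\delta_k \in C(H\backslash G/H)$; and applying the $G$-invariance with $h=g$ gives $d(o, g^{-1}\cdot o) = d(o, g\cdot o)$, so $\delta_k(g^{-1}) = \delta_k(g)$, i.e. $\delta_k$ is symmetric.

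First I would prove the identity
\[
|H|\,\Delta_k\omega_i(gH) = (\omega_i * \delta_k)(g), \qquad g \in G.
\]
By transitivity choose $g$ with $g\cdot o = x$; substituting $y = g\cdot z$ and using the $G$-invariance of $d$ turns $\Delta_k\omega_i(x) = \sum_{\{y\mid d(x,y)=k\}}\omega_i(y)$ into $\sum_{\{z\mid d(o,z)=k\}}\omega_i(g\cdot z)$. Passing from the sum over cosets $z \in X$ to a sum over group elements $t \in G$ multiplies by $|H|$: grouping the $t$ by their coset $tH$, each coset has $|H|$ representatives, and both $\delta_k$ and $t\mapsto\omega_i(gt)$ are right-$H$-invariant, so $\sum_{t\in G}\delta_k(t)\omega_i(gt) = |H|\,\Delta_k\omega_i(gH)$. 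Reindexing by $u = gt$ and using the symmetry $\delta_k(g^{-1}u) = \delta_k(u^{-1}g)$ rewrites the right-hand side as $\sum_{u}\omega_i(u)\delta_k(u^{-1}g) = (\omega_i*\delta_k)(g)$, establishing the identity.

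Next I would diagonalise. By Corollary \ref{13} the $\{\omega_j\}$ form a basis of $C(H\backslash G/H)$, so $\delta_k = \sum_j c_j\omega_j$, and Proposition \ref{12}(3) gives $\omega_i * \omega_j = \tfrac{|G|}{\dim V_i}\delta_{ij}\omega_i$; hence $\omega_i*\delta_k = c_i\tfrac{|G|}{\dim V_i}\omega_i$ is a scalar multiple of $\omega_i$. Combined with the displayed identity this yields $\Delta_k\omega_i = \nu\,\omega_i$ for a scalar $\nu$, so $\omega_i$ is an eigenfunction of $\Delta_k$. To identify $\nu$ I would evaluate at the base point: since $\omega_i(e) = 1$ by Proposition \ref{12}(2), we get $\nu = \nu\,\omega_i(o) = \Delta_k\omega_i(o) = \sum_{\{y\mid d(o,y)=k\}}\omega_i(y)$, which is exactly the claimed eigenvalue.

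The only genuinely routine part is the bookkeeping in the displayed identity — tracking the factor $|H|$ when passing between $C(X)$ and $C(G/H)\subseteq C(G)$, and verifying that $\delta_k$ is bi-$H$-invariant and symmetric from the $G$-invariance of $d$. The conceptual input is that convolution by \emph{any} bi-$H$-invariant function is simultaneously diagonalised by the $\omega_i$, which is precisely the commutativity of $C(H\backslash G/H)$ guaranteed by the Gelfand property together with Proposition \ref{12}(3). Equivalently, and this is the cleanest way to see the eigenfunction property, $\Delta_k$ is a $G$-equivariant endomorphism of the multiplicity-free module $C(G/H) = \bigoplus_i \phi_i(V_i)$, so by Schur's lemma it acts as a scalar on each $\phi_i(V_i)$, and $\omega_i$ spans the line of $H$-invariants therein; the convolution computation above is then only needed to pin down the eigenvalue explicitly.
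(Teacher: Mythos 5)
Your proof is correct, but it follows a genuinely different route from the paper's. The paper first shows that $\Delta_k$ commutes with the left $G$-action, then evaluates $\Delta_k\widetilde{\omega_i}$ at a diagonal representative $g=(g_1,\dots,g_n,id)$: choosing diagonal representatives $y'$ for the cosets at distance $k$ from the base point, it uses the commutation $gy'=y'g$ --- which is special to $G(r,d,n)/S_n$, both elements lying in the abelian diagonal subgroup --- to expand $\overline{\rho^i_{11}}(y'g)=\sum_j\overline{\rho^i_{1j}}(y')\,\overline{\rho^i_{j1}}(g)$, so that $\Delta_k\widetilde{\omega_i}$ is exhibited as a linear combination of the basis functions $\widetilde{\phi_i(v_j^i)}$ of $\phi_i(V_i)$; the bi-$S_n$-invariance of $\Delta_k\widetilde{\omega_i}$ then forces all coefficients with $j\neq 1$ to vanish, and the eigenvalue appears directly as the $j=1$ coefficient. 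You instead realise $\Delta_k$ on right-$H$-invariant functions as right convolution with the symmetric, bi-$H$-invariant indicator $\delta_k$, and diagonalise using $\omega_i*\omega_j=\frac{|G|}{\dim(V_i)}\delta_{ij}\omega_i$ from Proposition \ref{12} together with the basis statement of Corollary \ref{13}, identifying the eigenvalue by evaluating at the base point via $\omega_i(e)=1$. What each approach buys: your argument never touches matrix coefficients and never needs the commutation trick, so it works verbatim for any finite Gelfand pair $(G,H)$ equipped with a $G$-invariant metric on $G/H$, whereas the paper's computation is tied to the existence of commuting diagonal coset representatives in $G(r,d,n)/S_n$; your closing Schur's lemma observation (equivariance plus multiplicity-freeness) is in fact the conceptual skeleton that the paper's basis-comparison argument implements by hand. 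Conversely, the paper's method displays the eigenvalue and the vanishing of the off-spherical components explicitly at the level of matrix coefficients, with no need for the bookkeeping factor $|H|$ between sums over $X$ and sums over $G$ that your convolution identity requires.
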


\begin{proof}
We turn a right-$S_n$-invariant function $f$ on $G(r,d,n)$ into a function on $G(r,d,n)/S_n$ by setting $\tilde{f}(gS_n) = f(g)$. First we prove $\Delta_k$ commutes with every $g \in G(r,d,n)$. Let $g \in G(r,d,n)$, $f \in C(G(r,d,n)/S_n)$, then:
\begin{flalign*}
& g(\Delta_k f)(x) = (\Delta_k f)(g^{-1}x) = \sum_{\{y \ | \ d(y, g^{-1}x) = k\}} f(y) = \sum_{\{y \ | \ d(gy, x) = k\}} f(y) =
\\ & \sum_{\{z \ | \ d(z, x) = k\}} f(g^{-1}z) = \sum_{\{z \ | \ d(z, x) = k\}} gf(z) = \Delta_k(gf)(x).
\end{flalign*}
If $\omega_i$ is a spherical function, $\omega_i \in C(S_n \backslash G(r,d,n) / S_n) \subseteq C(G(r,d,n) / S_n) = \bigoplus_{i=1}^s \phi_i(V_i)$. If $h \in S_n$:
$$ h(\Delta_k \widetilde{\omega_i}) = \Delta_k (h \widetilde{\omega_i}) = \Delta_k \widetilde{\omega_i}.$$
So $\Delta_k \widetilde{\omega_i} \in C(S_n \backslash G(r,d,n) / S_n)$. We evaluate $\Delta_k \widetilde{\omega_i} (x)$, choose a $g = (g_1, \dots, g_n, id)$ such that $x = gS_n$:
$$\Delta_k \widetilde{\omega_i}(x) = \Delta_k \widetilde{\omega_i}(geS_n) = g^{-1} \Delta_k \widetilde{\omega_i}(eS_n) = \Delta_k g^{-1} \widetilde{\omega_i}(eS_n) = \sum_{\{y | d(e,y) = k\}} g^{-1} \widetilde{\omega_i}(y).$$
For each $y$ we can choose a $y' = (y_1, \dots, y_n, id)$ such that $y = y'S_n$. Then $gy' = y'g$. Thus:
\begin{flalign*}
& \Delta_k \widetilde{\omega_i}(x) = \sum_{\{y | d(e,y) = k\}} g^{-1} \widetilde{\omega_i}(y) = \sum_{\{y | d(e,y) = k\}} \widetilde{\omega_i}(gy) = \sum_{\{y | d(e,y) = k\}}\overline{\rho^i_{11}}(gy')
\\ & = \sum_{\{y | d(e,y) = k\}}\overline{\rho^i_{11}}(y'g) = \sum_{\{y | d(e,y) = k\}}\sum_j \overline{\rho^i_{1j}}(y')\overline{\rho^i_{j1}}(g) = \sum_j (\sum_{\{y | d(e,y) = k\}}\overline{\rho^i_{1j}}(y'))\overline{\rho^i_{j1}}(g)
\end{flalign*}
Recall the function $\phi_i$ is from the section on zonal spherical functions. We obtain:
$$\Delta_k \omega = \sum_j (\sum_{\{y | d(e,y) = k\}}\overline{\rho^i_{1j}}(y'))\widetilde{\phi_i(v_j^i)}.$$
Remember that the $(\widetilde{\phi_i(v_j^i)})_{i,j}$ form a basis for $C(G(r,d,n)/S_n)$ and the $(\widetilde{\phi_i(v_1^i)})_i$ a basis for $C(S_n \backslash G(r,d,n) / S_n)$. Because $\Delta_k \widetilde{\omega_i} \in C(S_n \backslash G(r,d,n) / S_n) \subseteq C(G(r,d,n)/S_n)$ we deduce by comparison of bases that the terms before $\widetilde{\phi_i(v_j^i)}$ must be 0 if $j \neq 1$.
\end{proof}

We will only consider the group $G(r,1,n)$ in the following.

\begin{corollary}
In case of the Hamming distance and the Laplace operator $\Delta_1$, the eigenvalues of $\omega^{(\mathit{k}_0, \dots, \mathit{k}_{r-1})}$ are $\lambda_{(\mathit{k}_0, \dots, \mathit{k}_{r-1})} = n(r-1) - r \sum_{i=1}^{r-1} \mathit{k}_i$.
\end{corollary}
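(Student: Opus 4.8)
The plan is to invoke the theorem just proved, which identifies the eigenvalue of $\omega^{(k_0,\dots,k_{r-1})}$ under $\Delta_1$ as $\sum_{\{y \mid d(e,y)=1\}} \omega^{(k_0,\dots,k_{r-1})}(y)$, where $e$ is the identity coset. So the entire computation reduces to three tasks: describe the cosets $y \in G(r,1,n)/S_n$ at Hamming distance $1$ from $e$, evaluate the spherical function on each, and sum.

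First I would enumerate the neighbours. Representing cosets by $(y_1,\dots,y_n,id)$ with $y_i \in C_r$, the condition $d(e,y)=1$ means exactly one coordinate $y_i=\xi^j$ differs from $1$, with $j \in \{1,\dots,r-1\}$. Such a coset lies in the double coset whose parameter has $l_0=n-1$, $l_j=1$, and all other $l_i=0$. For each fixed $j$ there are $n$ choices of the distinguished position, so the $n(r-1)$ neighbours split into $r-1$ double cosets, each containing exactly $n$ cosets, and on each double coset the spherical function is constant.

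Next I would evaluate $\omega^{(k_0,\dots,k_{r-1})}$ on the double coset with $l_j=1$ via Theorem \ref{19} and Definition \ref{defhgf}. The decisive simplification is that the tuple $l=(l_1,\dots,l_{r-1})$ has a single nonzero entry, equal to $1$. In the hypergeometric sum over matrices $A=(a_{im})$, the factor $(-l_i)$ attached to the row sum of row $i$ forces every row $i\neq j$ to vanish (since $(0)_s=0$ for $s\geq 1$), while the factor $(-1)$ on row $j$ forces its row sum to be $0$ or $1$ (since $(-1)_s=0$ for $s\geq 2$). Only $A=0$ and the rank-one matrices $a_{jm}=1$ survive. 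The former contributes $1$; the latter, reading off the column-sum Pochhammer $(-k_m)_1=-k_m$, the total-sum factor $(-n)_1=-n$, and the matrix entry $(1-\xi^{jm})$, contributes $-\tfrac{k_m}{n}(1-\xi^{jm})$. Summing gives the value $1-\tfrac{1}{n}\sum_{m=1}^{r-1} k_m(1-\xi^{jm})$ on that double coset.

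Finally I would assemble the eigenvalue. Summing $n$ times this value over $j=1,\dots,r-1$ yields $n(r-1)-\sum_{j=1}^{r-1}\sum_{m=1}^{r-1} k_m(1-\xi^{jm})$, and the inner sum over $j$ collapses using the root-of-unity identity $\sum_{j=0}^{r-1}\xi^{jm}=0$ for $1\leq m\leq r-1$, which gives $\sum_{j=1}^{r-1}(1-\xi^{jm})=(r-1)-(-1)=r$. Hence $\lambda_{(k_0,\dots,k_{r-1})}=n(r-1)-r\sum_{m=1}^{r-1} k_m$, as claimed. The main obstacle is the careful bookkeeping in the hypergeometric evaluation: correctly distinguishing in Definition \ref{defhgf} which Pochhammer factors are row sums (the $-l_i$) and which are column sums (the $-k_m$), and verifying that the truncation leaves only $A=0$ together with the single-entry matrices. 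Once that is in hand, the remaining steps are routine arithmetic and a standard geometric-sum identity.
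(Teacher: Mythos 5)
Your proposal is correct and follows essentially the same route as the paper: identify the distance-one cosets as the $r-1$ double cosets $(n-1,0,\dots,0,l_j=1,0,\dots,0)$ each containing $n$ cosets, evaluate the spherical function there as $1-\tfrac{1}{n}\sum_m k_m(1-\xi^{jm})$, and collapse the double sum with the root-of-unity identity $\sum_{j=1}^{r-1}(1-\xi^{jm})=r$. The only difference is that you derive the hypergeometric value from Definition \ref{defhgf} by analysing which matrices $A$ survive the Pochhammer truncation, whereas the paper states this value directly; your extra detail is sound.
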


\begin{proof}
We look at the Hamming distance. The eigenvalue of a spherical function \\ $\omega^{(\mathit{k}_0, \dots, \mathit{k}_{r-1})}$ is $\sum_{\{y|d(e,y)=1\}}\omega^{(\mathit{k}_0, \dots, \mathit{k}_{r-1})}(y)$. The $r$-tuple connected to a $y$ with $d(e,y) = 1$ is $(n-1, 0, \dots, 0 ,\mathit{l}_j = 1, 0, \dots, 0)$ where $j \neq 0$ and a $y$ with such an $r$-tuple connected to it appears $n$ times, only one of the entries of $(y_1, \dots, y_n, id)$ is not $1$. Hence we know that the eigenvalue of $\omega^{(\mathit{k}_0, \dots, \mathit{k}_{r-1})}$ is:
$$ \sum_{\{y|d(e,y)=k\}}\omega^{(\mathit{k}_0, \dots, \mathit{k}_{r-1})}(y) = \sum_{j=1}^{r-1} n \omega^{(\mathit{k}_0, \dots, \mathit{k}_{r-1})}_{(n-1, 0, \dots, 0, \mathit{l}_j = 1, 0, \dots, 0)}.$$
We see that:
\begin{flalign*}
& \omega^{(\mathit{k}_0, \dots, \mathit{k}_{r-1})}_{(n-1, 0, \dots, 0, \mathit{l}_j = 1, 0, \dots, 0)} = F((0, \dots, 0, -\mathit{l}_j = -1, 0, \dots, 0), (-\mathit{k}_1, \dots, -\mathit{k}_{r-1}); -n; \widetilde{\Xi}_{r})
\\ & = 1 - \sum_i \frac{\mathit{k}_i}{n}(1 - \xi^{ij}).
\end{flalign*}
Thus we find that the eigenvalue $\lambda_{(\mathit{k}_0, \dots, \mathit{k}_{r-1})}$ of $\Delta_1$ with $\omega^{(\mathit{k}_0, \dots, \mathit{k}_{r-1})}$ as eigenvector is:
\begin{flalign*}
& \sum_{j=1}^{r-1} n \omega^{(\mathit{k}_0, \dots, \mathit{k}_{r-1})}_{(n-1, 0, \dots, 0, \mathit{l}_j = 1, 0, \dots, 0)} =  n(r-1) - \sum_{i=1}^{r-1}\sum_{j=1}^{r-1}\mathit{k}_i(1- \xi^{ij}) =
\\ & n(r-1) - \sum_{i = 1}^{r-1} \mathit{k}_i (r-1 - \sum_{j=1}^{r-1}\xi^{ij}) = n(r-1) -  \sum_{i = 1}^{r-1} \mathit{k}_i (r-1 - (\frac{\xi^{ri} - 1}{\xi^i - 1} - 1)) =
\\ & n(r-1) -  \sum_{i = 1}^{r-1} \mathit{k}_i (r-1 - (0 -1)) = n(r-1) -  \sum_{i = 1}^{r-1} \mathit{k}_i r = n(r-1) - r \sum_{i=1}^{r-1} \mathit{k}_i.
\end{flalign*}
\end{proof}
We directly obtain the eigenvalues of $\Delta_1$ of the $\omega^{(\mathit{k}_0, \dots, \mathit{k}_{r-1})}$ are not unique in general, hence the Laplace operator does not split the zonal spherical functions into different eigenspaces for the Hamming distance.

Now we investigate what $\Delta_1 \omega = \lambda_{(\mathit{k}_0, \dots, \mathit{k}_{r-1})} \omega$ means for concrete values of the hypergeometric functions. If $x$ is the double coset indexed by $(\mathit{l}_0, \dots, \mathit{l}_{r-1})$ we obtain:
\begin{flalign*}
& \Delta_1 \omega^{((\mathit{k}_0, \dots, \mathit{k}_{r-1})}(x) = \sum_{i=1}^n \sum_{y_i \neq x_i} \omega^{((\mathit{k}_0, \dots, \mathit{k}_{r-1})}(x_1, \dots, x_{i-1}, y_i, x_{i+1}, \dots, x_n, id)
\\ & = \sum_{i=0}^{r-1} \mathit{l}_i \sum_{k \neq i} \omega^{((\mathit{k}_0, \dots, \mathit{k}_{r-1})}_{(\mathit{l}_0, \dots, \mathit{l}_i - 1, \dots, \mathit{l}_k +1, \dots, \mathit{l}_{r-1})}.
\end{flalign*}
This means that (if ${e_i} = (\delta_{ij})_{j=1}^{r-1}$, where $\delta$ the Kronecker delta function):
\begin{flalign*}
& \lambda_{(\mathit{k}_0, \dots, \mathit{k}_{r-1})}F(-\mathit{l}, -\mathit{k}; -n; \widetilde{\Xi}_{r})
\\ & = \sum_{i=1}^{r-1}\mathit{l}_0 F(-(\mathit{l} + e_i),-\mathit{k}; -n; \widetilde{\Xi}_{r}) +
\\ & \sum_{i=1}^{r-1} \mathit{l}_i (F(-(\mathit{l} - e_i),-\mathit{k}; -n; \widetilde{\Xi}_{r}) + \sum_{j=1, j \neq i}^{r-1} F(-(\mathit{l} - e_i + e_j), -\mathit{k}; -n; \widetilde{\Xi}_{r})).
\end{flalign*}
In the case $r = 2$ we obtain:
\begin{flalign*}
& (N - 2n) \ {}_2F_1(- x,-n;-N, 2) =
\\ & (N-x) \ {}_2F_1(- x - 1,-n;-N, 2) + x \ {}_2F_1(-x + 1,-n;-N, 2).
\end{flalign*}
This coincides with a result from \cite[Remark 5.3.3]{cecsil08}. Compare with the special case of the product formula when $r=2$.

\section*{Acknowledgements}
I would like to express my sincere gratitude to professor Hirofumi Yamada of Kumamoto University, who graciously received me when I visited Japan and discussed mathematics with me. I would also like to express my gratitude to professor Hiroshi Mizukawa from the National Defense Academy of Japan, whose articles I read and who travelled to Kumamoto University while I was visiting. Finally, I would like to thank professor Gert Heckman and professor Erik Koelink for their help.

\end{document}